\newtheorem{thm}{Theorem}
\newtheorem{lemma}[thm]{Lemma}
\newtheorem{notation}[thm]{Notation}
\newtheorem{prop}[thm]{Proposition}
\newtheorem{conj}[thm]{Conjecture}
\newtheorem{ques}[thm]{Question}
\theoremstyle{definition}
\newtheorem{defn}[thm]{Definition}
\newtheorem{eg}[thm]{Example}
\newtheorem{remark}[thm]{Remark}
\numberwithin{thm}{section}
\DeclareMathOperator{\depth}{depth}
\newcommand{\m}{\mathfrak{m}}
\renewcommand{\phi}{\varphi}
\DeclareMathOperator{\charac}{char}
\renewcommand{\to}{\longrightarrow}
\newcommand{\x}{\mathbf{x}}
\title{Numbers of Generators of Perfect Ideals}
\author{Raymond C Heitmann}
\date{\today}
\begin{document}

\maketitle

\begin{abstract}
This article is concerned with bounds on the number of generators of perfect ideals $J$ in regular local rings $(R,\m)$.
If $J$ is sufficiently large modulo $\m^n$, a bound is established depending only on $n$ and the projective dimension of $R/J$.
More ambitious conjectures are also introduced with some partial results.
\end{abstract}

This work is inspired by an article written by Ma \cite {M} on Lech's Conjecture, though ultimately it has no direct connections to any work on that conjecture.
Ma demonstrated that Lech's Conjecture would be established if one could establish a certain weaker form of the following conjecture on multiplicities. (Note that Ma did not state this as a conjecture.)
\begin{conj}
Let $(R,\m)$ be a complete local ring and let $J$ be a perfect ideal of $R$.
Then $e(R/J)\geq e(R)$.
\end{conj}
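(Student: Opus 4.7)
The plan is to reduce to the Artinian case and then bound $\ell(R/J)$ below by $e(R)$ via a free-resolution calculation.

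\emph{Stage 1: reduction to $\dim(R/J) = 0$.} After enlarging the residue field (e.g.\ passing to $R[T]_{\m R[T]}$), I would iteratively pick $x \in \m$ avoiding every associated prime of $R$ and of $R/J$ and superficial for the $\m$-adic filtration on both modules; such $x$ exists by prime avoidance. Replacing $R$ by $R/(x)$ and $J$ by $(J+(x))/(x)$: since $x$ is regular on $R$ and on $R/J$, grade and projective dimension are preserved under the change of rings, so the new ideal is still perfect; superficiality gives $e(R/(x)) = e(R)$ and $e((R/J)/x(R/J)) = e(R/J)$. Iterating $\dim(R/J)$ times reduces to the case $\dim(R/J) = 0$ with the conjecture unchanged.

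\emph{Stage 2: the Artinian case.} Now the claim becomes $\ell(R/J) \ge e(R)$, and $h := \pd(R/J) = \operatorname{grade}(J) = \depth(R)$. Assume first that $R$ is Cohen-Macaulay, so $h = \dim R$, and choose a minimal reduction $I = (y_1, \dots, y_h)$ of $\m$ with $\ell(R/I) = e(R)$. Using the minimal resolution $F_\bullet \to R/J$ of length $h$ and the Koszul resolution of $R/I$, form the Euler characteristic $\chi(R/J, R/I) = \sum_{i=0}^{h} (-1)^i \ell(\Tor_i^R(R/J, R/I))$. Since $R/J$ has finite projective dimension and $\dim(R/J) + \dim(R/I) = 0 < h$, the new intersection theorem (Roberts) gives $\chi(R/J, R/I) = 0$, so
\[ \ell\bigl(R/(J+I)\bigr) \;=\; \sum_{i \ge 1} (-1)^{i-1} \ell\bigl(\Tor_i^R(R/J, R/I)\bigr). \]

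\emph{Main obstacle.} The remaining task---deducing $\ell(R/J) \ge \ell(R/I)$ from the vanishing Euler characteristic---is the heart of the conjecture and where I expect the difficulty to concentrate. No individual Tor-term has a guaranteed sign, and $\ell(R/(J+I))$ does not obviously compare to $\ell(R/J)$ in the useful direction. Strategies I would attempt are (i) induction on $h$ using the first-syzygy exact sequence $0 \to \Omega(R/J) \to F_0 \to R/J \to 0$ together with the preservation of perfectness under syzygies, (ii) a deformation of the pair $(R,J)$ to one with $R$ regular---where $e(R) = 1$ makes the conjecture trivial---while controlling multiplicities through the deformation, and (iii) a direct analysis of the Betti numbers of $R/J$ in the spirit of Horrocks / Evans-Griffith. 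The non-Cohen-Macaulay subcase, where no minimal reduction realizes $e(R)$ as a length, appears to require yet further ideas. That the paper opts for weaker numerical bounds under extra hypotheses on $J$ modulo $\m^n$ strongly suggests that none of these direct strategies closes the gap without a genuinely new input.
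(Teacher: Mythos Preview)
This statement is Conjecture~0.1 in the paper and is presented there as an \emph{open problem}: the paper offers no proof and explicitly remarks that it ``seems rather difficult to attack.'' So there is no proof in the paper against which to compare your proposal; the paper instead pivots to the much more modest goal of bounding $\mu(J)$ for perfect ideals in \emph{regular} local rings, where $e(R)=1$ and the multiplicity conjecture is vacuous.

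Your proposal is, as you yourself recognize, not a proof either. Stage~2 reaches the vanishing $\chi(R/J,R/I)=0$ and then stops, correctly identifying that extracting $\ell(R/J)\ge\ell(R/I)$ from this is the whole content of the conjecture in the Artinian Cohen--Macaulay case. In fact the vanishing carries essentially no information here: computing $\chi$ from the minimal free resolution of $R/J$ gives $\chi=\bigl(\sum_i(-1)^i\beta_i\bigr)\,\ell(R/I)$, so $\chi=0$ is just the familiar fact that the alternating sum of Betti numbers of a finite-length module of positive projective dimension is zero, and says nothing about $\ell(R/J)$. (Incidentally, since $I$ is generated by a regular sequence in the CM case, the vanishing already follows from Serre's Koszul formula; you do not need Roberts.) One further gap worth flagging: your Stage~1 reduction needs an element regular on $R/J$, i.e.\ $\depth R/J>0$; Auslander--Buchsbaum gives $\depth R/J=\depth R-g$, and when $R$ is not Cohen--Macaulay one can have $\depth R=g$ while $\dim R/J>0$, so the reduction may stall before reaching the Artinian case. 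You note the non-CM case is problematic in Stage~2, but it already obstructs Stage~1.
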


There is no evidence counter to this conjecture, but it seems rather difficult to attack.  
The basic premise behind it is that making an ideal perfect somehow makes it ``small".
Of course, literally the notion of small that we need is that $e(R/J)$ is sufficiently large, but there are other notions of smallness that may be useful - either $J$ has relatively few generators or $J$ is contained in a large power of the maximal ideal.
A natural approach is to blend these two things.

The problem is trivial when the height of $J$ is less than 2.
When the height of $J$ equals $2$, we have the Hilbert -Burch Theorem \cite [Theorem 20.15]{E}.
This tells us that if $J\nsubseteq \m^n$ for some integer $n$, $\mu(J)\leq n$.
However, if the height of $J$ is greater than two, essentially nothing is known.
Our goal then is to try to figure out what the results might be and attempt to prove them.
The author begins with a very optimistic premise, namely that perfect ideals in general resemble perfect ideals in regular local rings.
This is at least true when the height of $J$ is at most two.
Thus one would simply develop the theory for regular local rings and then extend it to the more general case.
At this time, this author has no plan for how to make these extensions when the regular local ring theory is available and frankly believes there are others more capable of doing so than he.
In any case, the logical first step is the development of a satisfactory theory for regular local rings.
Accordingly, the true subject of this article is handling the case where $R$ is a regular local ring.
The hoped for conclusion for RLR theorems is relatively obvious, the hypothesis less so.
We note that if $\dim R=d$, then $\mu(\m^{n-1})=\begin{pmatrix} n+d-2 \\ d-1 \end{pmatrix}$ and so there is a natural prospective upper bound.
It is relatively easy to prove that this bounds $\mu(J+\m^n/\m^n)$ (see Proposition~\ref{g=d}) but it need not bound $\mu(J)$.
Moreover, a true analogue of Hilbert-Burch should not depend on the dimension $d$ of $R$ but only on the height $g$ of $J$.
There are two somewhat obvious conjectures which are generalizations of Hilbert-Burch and are stated here for regular rings.
To this author, the word conjecture is more akin to a suggestion for a starting point and not an expectation of what is likely to be true.  
In fact, neither of these conjectures is correct in the form stated here.
An example demonstrating this will be offered in Section 2.
However, at that time, we shall offer modifications of the conjectures which we hope are true.
We shall also present our main theorem, Theorem~\ref{main}, at that time, a theorem which gives a bound which coincides with Hilbert-Burch in the $g=2$ case, but which is higher than the bounds we are striving for.
Finally in Section 3, we shall present a few positive results in the $g=3$ case for small $n$.

\begin{conj}\label{E1}
 Let $(R,\m)$ be a local ring and suppose $J$ is a perfect ideal of grade $g>0$.
Then $\mu(J+\m^n/\m^n)\leq \begin{pmatrix} g+n-2\\ g-1 \end{pmatrix}$.
\end{conj}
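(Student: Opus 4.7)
The natural approach is to induct on the codimension $d - g$ when $R$ is regular local of dimension $d$, reducing to the case $d = g$ handled by Proposition~\ref{g=d}. First I complete $R$ and pass to an infinite residue field by a faithfully flat extension; neither operation alters $g$, perfectness, or $\mu((J+\m^n)/\m^n)$. In line with the paper's stated focus, the non-regular case is deferred.

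Before the induction, I dispose of $n = 2$. Since $\m J \subseteq \m^2$, Nakayama gives
$$\mu((J+\m^2)/\m^2) = \dim_k (J+\m^2)/\m^2,$$
which counts linearly independent degree-$1$ forms appearing in elements of $J$. In a regular local ring any such independent collection extends to a regular system of parameters, hence forms a regular sequence inside $J$; since $J$ has grade $g$, at most $g$ such forms exist, giving $\binom{g}{g-1} = g$.

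For the inductive step with $d > g$, I choose $y \in \m$ with (i) $y \notin J + \m^2$ and (ii) $y$ a nonzerodivisor on $R/J$. Since $R/J$ is Cohen--Macaulay of positive dimension, $\m \notin \Ass(R/J)$, so the images in $\m/\m^2$ of $J + \m^2$ and of each associated prime of $R/J$ are proper subspaces; prime avoidance over the infinite residue field then produces $y$. Setting $R' := R/(y)$, which is regular local of dimension $d-1$ with $JR'$ perfect of grade $g$, what remains is to verify
$$\mu((J+\m^n)/\m^n)_R \;=\; \mu((JR' + \m'^n)/\m'^n)_{R'}.$$
By Nakayama on both sides this reduces to the colon formula $(J + \m^n) : y = J + \m^{n-1}$; the inclusion $\supseteq$ is immediate. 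After $d - g$ such reductions one arrives at the $g = d$ setting and invokes Proposition~\ref{g=d}.

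The main obstacle is this colon formula. Unwound, it asks that the initial form of $y$ act as a nonzerodivisor on the associated graded ring $\operatorname{gr}_{\bar\m}(R/J) = \operatorname{gr}_\m(R)/J^*$; equivalently, that $y$ be a \emph{superficial element of order $1$} for $R/J$. Such $y$ is plentiful when $\operatorname{gr}_{\bar\m}(R/J)$ is itself Cohen--Macaulay, but perfectness of $J$ does not force this: the initial ideal $J^*$ may acquire embedded primes that every order-$1$ lift encounters. For such $J$ the induction breaks down, and this is precisely where I would expect the Section~2 counterexample — signaled in the introduction — to enter.
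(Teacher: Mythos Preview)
There is nothing to compare against: Conjecture~\ref{E1} is \emph{false}, and the paper does not attempt to prove it. The example in Section~2 exhibits, for each $N \geq 3$, a grade-three perfect ideal $I$ in a regular local ring of dimension $N+4$ with $\mu((I+\m^3)/\m^3) = N+4$, so no bound depending only on $g$ and $n$ can hold. The sole positive result toward the conjecture is Proposition~\ref{g=d}, the case $g = \dim R$; it follows in one line from Lemma~\ref{socle} and involves no reduction of dimension.

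Your diagnosis of the obstruction is correct, and you are right that the Section~2 example is what enters. In that example $I + \m^3 = x_1\m + \m^3$, so $(I+\m^3)/\m^3 \cong \m/\m^2$ via multiplication by $x_1$, while the unique associated prime of $R/I$ is $(x_1,x_2,x_3)$. Hence any $y \in \m \setminus \m^2$ regular on $R/I$ satisfies $yx_1 \in x_1\m \subseteq I+\m^3$ with $x_1 \notin \m^2 = I+\m^2$, so your colon identity $(I+\m^3):y = I+\m^2$ fails for \emph{every} admissible $y$, and the minimal number of generators strictly drops at each reduction step. This is not a technical defect of the argument but the mechanism by which the conjecture itself fails; the paper's response is not to repair the induction but to replace Conjecture~\ref{E1} by Conjecture~\ref{C1}, adding the hypothesis that $J \nsubseteq I_0 + \m^n$ for any ideal $I_0$ of height $g-2$, which rules out precisely the containment $I \subseteq x_1R + \m^3$ driving the example.
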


\begin{conj}\label{E2}
Let $(R,\m)$ be a regular local ring and suppose $J$ is a perfect ideal of grade $g>1$.
If $\mu(J+\m^n/\m^n)\geq \begin{pmatrix} g+n-3\\ g-2 \end{pmatrix}$, then $\mu(J)\leq \begin{pmatrix} g+n-2\\ g-1 \end{pmatrix}$.
\end{conj}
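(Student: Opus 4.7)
The plan is to emulate the proof of the Hilbert--Burch theorem (which is precisely the $g = 2$ base case) and extend it by induction on $g$. In the $g = 2$ argument, one uses that in a minimal free resolution of $R/J$ every entry of the presentation matrix lies in $\m$, so every $k \times k$ minor lies in $\m^k$; Hilbert--Burch then expresses $J$ as the ideal of maximal minors of the relation matrix, forcing $\mu(J) \leq n$ whenever $J \not\subseteq \m^n$. For general $g$ one hopes for an analogous conclusion driven by the higher syzygies and by the Buchsbaum--Eisenbud structure theorems for perfect ideals.

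The first step is to reduce to the artinian case $g = \dim R$. Since $R/J$ is Cohen--Macaulay of dimension $d - g$, one can choose $x_1, \dots, x_{d-g} \in \m \setminus \m^2$ that are simultaneously a regular sequence on $R/J$ and part of a regular system of parameters for $R$. Then $R' = R/(x_1, \dots, x_{d-g})$ is a regular local ring of dimension $g$ with maximal ideal $\m' = \m R'$, and $J' = JR'$ is $\m'$-primary and perfect of grade $g$ with $\mu(J') = \mu(J)$. Provided the $x_i$ are chosen generically enough, the hypothesis $\mu(J + \m^n/\m^n) \geq \binom{g + n - 3}{g - 2}$ transfers to the analogous inequality in $R'$, so one may assume from the outset that $J$ itself is $\m$-primary.

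Once in the artinian setting, the plan is to analyze the initial ideal of $J$ in $\mathrm{gr}_{\m} R \cong k[y_1, \dots, y_g]$. The hypothesis provides a lower bound on the number of generators of this initial ideal in degrees less than $n$, while the target bound $\binom{g + n - 2}{g - 1}$ equals the Hilbert function value of $\mathrm{gr}_{\m} R$ in degree $n - 1$. The perfectness of $J$ supplies a multiplicative structure on the minimal free resolution via Buchsbaum--Eisenbud, and one would try to combine this with the lower bound in low degrees to show that no generator of $J$ can be forced into a degree $\geq n$ beyond what is permitted. A companion bound on $\mu(J + \m^n/\m^n)$ from the direction of Proposition~\ref{g=d} should then close up the counting.

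The main obstacle is the absence of a clean structural result analogous to Hilbert--Burch for $g \geq 3$. For $g = 2$, the ideal $J$ is literally generated by maximal minors of a matrix with entries in $\m$, and this is the sole source of the bound; for $g \geq 3$, Buchsbaum--Eisenbud provides multiplicative structure but not a combinatorial description expressing the generators as minors of a single small matrix. Moreover, a generic hyperplane section need not preserve perfectness, so induction on $g$ is delicate. I expect these obstructions to be precisely what makes the conjecture fail as stated, and that the counterexample promised in Section~2 displays a perfect ideal whose generators sit in degrees and configurations that evade the naive inductive argument, motivating the modified conjectures and the coarser bound of Theorem~\ref{main}.
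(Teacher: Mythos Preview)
The statement you are addressing is a \emph{conjecture} that the paper explicitly \emph{disproves}: the example in Section~2 exhibits, for every $N\ge 3$, a height-three perfect ideal $I$ in an $(N+4)$-dimensional polynomial ring with $\mu(I)=\mu(I+\m^3/\m^3)=N+4$, so there is no bound of the form $\binom{g+n-2}{g-1}$ in the $g=n=3$ case. You yourself anticipate this in your final paragraph, so the honest summary of your proposal is that it is not a proof attempt but an exploration that correctly arrives at ``this should fail.'' That is the right conclusion; there is nothing to compare to a ``paper's own proof,'' because none exists.

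That said, two points in your intermediate analysis deserve correction. First, your reduction to the artinian case does go through (choosing a regular sequence on $R/J$ inside a regular system of parameters preserves $\mu(J)$ and the grade), but it does \emph{not} rescue the conjecture: reducing the paper's example modulo $y_0,\dots,y_N$ yields the $\m$-primary ideal $I'=(x_1^2,x_1x_2,x_1x_3,x_2^N,x_2^{N-1}x_3,\dots,x_3^N)$ in a three-dimensional regular local ring, which is perfect of grade $3$, satisfies $\mu(I'+\m^3/\m^3)=3=\binom{3}{1}$, and still has $\mu(I')=N+4$. So the failure is already present in the artinian setting, and no amount of initial-ideal or Buchsbaum--Eisenbud analysis can repair it. Second, your diagnosis of the obstruction (``absence of a clean structural result analogous to Hilbert--Burch'') misses the actual mechanism the paper isolates: the counterexamples all satisfy $J\subseteq yR+\m^n$ for some $y\in\m$, and it is precisely this containment that the paper's revised Conjectures~\ref{C1} and~\ref{new} exclude. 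The issue is geometric (the initial forms of $J$ sit inside a low-height ideal) rather than homological.
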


\section{notation}

Since the number of generators of ideals and modules is not affected by faithfully flat extensions, we can reduce our problem to the consideration of complete local rings with algebraically closed residue fields.
None of our proofs actually require an algebraically closed residue field and so we will not assume it.
However, an infinite residue field will often be needed and so we will make that a standard assumption.
Accordingly, throughout this article unless otherwise specified, $(R,\m,k)$ will be a complete regular local ring with an infinite residue field.
In general, we will not assume that $R$ is equal characteristic or unramified.

An ideal $I$ in a local ring $S$ is perfect if it has finite projective dimension and if $g$, the grade of $I$, equals the projective dimension of $S/I$.
Using the Auslander-Buchsbaum Formula, this implies that $\depth S/I=\depth S - g$ \cite [Theorem 19.9]{E}.
Since all ideals in regular local rings have finite projective dimension, it is easy to see that an ideal $I$ in a regular local ring is perfect if and only if $R/I$ is Cohen-Macaulay.
Throughout this article, $J$ will be a perfect ideal of grade $g$ in $R$.
Of course, $J$ also has height $g$.

We need a generalization of the standard bases of Hironaka, one that is applicable to the mixed characteristic case.
Standard bases were introduced in \cite{H} and developed more thoroughly by Becker in \cite{B}.
Throughout we let $A$ be a coefficient ring for $R$ (either a field or a DVR with maximal ideal $pA$) and let $x_1,\dots,x_d$ be a regular system of parameters.
If $A$ is a field, we have the usual setting $R=A[[x_1,\dots,x_d]]$, but the mixed characteristic case is more complicated and we do not assume $x_1=p$ even in the unramified case, the only case in which this is even possible.
Each element in $R$ can be viewed as a power series in the $x_i$'s with coefficients in $A$ such that each coefficient is either zero or a unit.
To see how to do this, given any $r\in R$, we can find an element $a\in A$ such that $r-a\in (x_1,\dots,x_d)R$.
In the equal characteristic case, $a$ is unique, but this is not true in the mixed characteristic case.
If $r\in (x_1,\dots,x_d)R$, we can choose $a=0$ and insist on doing so.
Thus, $a$ is either zero or a unit in $A$.
Now we have $r=a+x_1r_1+\dots+x_dr_d$.
We repeat the process with each $r_i$ and recursively obtain our power series representation.
Of course, if $A$ is not a field, this representation will not be unique.
Moreover it is not even true that the sum of two representations will be a representation.

We almost always\footnote{In the proof of Proposition~\ref{Lemma 487}, we use the more traditional lexicographic order without the sum term.} use a somewhat novel order on monomials.
We order the monomials $x_1^{e_1}\dots x_d^{e_d}$ using lexicographic order on the $(d+1)$-tuples \newline $(e_d,\dots,e_{g+1},\sum_{i=1}^ge_i,e_g,\dots,e_1)$.
The smallest monomials are then those which are not contained in the ideal $(x_{g+1},\dots,x_d)$; among those, the first criterion is degree.
Letting $T=\{t_\alpha\}$ denote the set of monomials suitably ordered, we note that $1<t$ for all $t\neq 1\in T$ and $t_1<t_2$ implies $st_1<st_2$ for all $s\in T$.
Thus our order is, by definition, an admissible term order on $T$.
Now we can express any element $f\in R$ as
$f=\sum c(t_\alpha,f)t_\alpha$ with each $c(t_\alpha,f)$ either zero or a unit in $A$.
Again we note that we must choose the set $\{c(t_\alpha,f)\}$ - it is not always unique.
Following \cite{B}, we set $T(f)=\{\alpha\mid c(t_\alpha,f)\neq 0\}$ and let $LT(f)$ be the least element in $T(f)$.
$T(f)$ may depend upon our representation.
However, as $t_\alpha\neq\sum_{\beta, t_\beta\nmid t_\alpha}c_\beta t_\beta$, $LT(f)$ is independent of our representation.

We will need a version of the Hironaka Theorem, which is found in \cite{B}.

\begin{thm}
(Hironaka Theorem) Let $<$ be an admissible order on $T$ and $I$ an ideal in a complete regular local ring.
Then there exists a finite set $S\subset I$ such that for every $f\in I$, there exists $g\in S$ with $LT(g)|LT(f)$.
Any such $S$ is a basis of $I$, and it is then called a standard basis of $I$ (wrt $<$).
If $f\in R$ and $\{g_1,\dots,g_m\}$ is a standard basis of $I$, then there exists $r\in R$ and a representation for $r$ with the properties that\begin{enumerate}
  \item there exist $q_1,\dots,q_m\in R$ with $f=\sum_{i=1}^mg_iq_i+r$, and
  \item for all $s\in LT(S)$, if $t\in T(r)$, then $s\nmid t$.
\end{enumerate}
\end{thm}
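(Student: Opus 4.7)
The plan is to follow the standard-basis framework of Becker and Hironaka: first produce a finite set $S$ whose leading terms generate the ``monoideal'' of leading terms of $I$, then argue that a suitably organized division algorithm converges in the complete ring $R$. Admissibility of $<$ and $\m$-adic completeness of $R$ will do the heavy lifting.

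For existence of $S$, I would consider $L(I)=\{LT(f)\mid 0\neq f\in I\}\subset T$. Admissibility gives $LT(tf)=t\cdot LT(f)$, so $L(I)$ is closed under multiplication by $T$. By Dickson's lemma on monomials in the $d$ variables $x_1,\dots,x_d$, this upset has only finitely many divisibility-minimal elements $s_1,\dots,s_m$. Lifting to $g_i\in I$ with $LT(g_i)=s_i$ produces $S=\{g_1,\dots,g_m\}$ with the required property. The same reasoning shows that \emph{any} finite $S\subset I$ with the divisibility property has $\{LT(g):g\in S\}$ generating $L(I)$ upward under divisibility.

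For the division statement (1)--(2), given $f\in R$ I would build $r$ and the $q_i$'s by iterative reduction, maintaining the invariant $f=f_n+\sum q_i^{(n)}g_i+r_n$. At each step one inspects a term of $f_n$: if its monomial is divisible by some $LT(g_i)$, write it as $t\cdot LT(g_i)$ and subtract $ctg_i$, where $c=c(\cdot,f_n)/c(LT(g_i),g_i)$ is a ratio of units; otherwise strip the term into $r$. Multiplicativity of $<$ ensures every newly introduced term is strictly greater in the order than the one cancelled, and the choice of $c$ kills the offending coefficient. Taking $\m$-adic limits in the complete ring $R$ then gives $f=\sum q_ig_i+r$, with property (2) holding by construction. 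The basis conclusion follows immediately: if $f\in I$, every $f_n$ remains in $I$ (we only subtract $R$-multiples of the $g_i$), so $LT(f_n)\in L(I)$ is always divisible by some $s_i$; the ``strip'' branch is never entered, forcing $r=0$ and $f\in(g_1,\dots,g_m)$.

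The main obstacle will be the convergence argument---in particular, the requirement that the residual $f_n$ tend to $0$ in the $\m$-adic topology. The naive ``always reduce the least monomial of $f_n$'' schedule can fail here: persistent low-$\m$-adic-order terms that happen to lie above $LT(f_n)$ in the admissible order may never get processed, and $f_n$ can converge to a nonzero limit even when $\deg LT(f_n)\to\infty$. The fix, worked out in \cite{B}, is to pair the admissible-order reduction with an $\m$-adic schedule so that within each $\m$-adic level the reduction completes in finitely many steps; completeness of $R$ together with admissibility of $<$ then delivers convergence of $\sum_nctg_i$, of the stripped-off series, and of $f_n\to 0$. The mixed-characteristic non-uniqueness of power-series representations causes no difficulty, since $LT(f)$ and the relevant unit coefficients $c(LT(f),f)$ are representation-independent.
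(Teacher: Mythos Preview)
Your outline is essentially Becker's argument, which is exactly what the paper invokes, so the overall strategy is right. The one place where you and the paper diverge is precisely the mixed-characteristic point you wave away in your last sentence, and this is the only thing the paper actually discusses in its proof.

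Your claim that ``$LT(f)$ and the relevant unit coefficients $c(LT(f),f)$ are representation-independent'' is not quite correct and, more importantly, is not the relevant issue. In mixed characteristic the leading coefficient $c(LT(f),f)$ is determined only modulo $pA$: if $c,c'$ are two unit choices, then $(c-c')t_\beta$ lies in $pR\cdot t_\beta\subset (x_1,\dots,x_d)t_\beta$, which can be absorbed into strictly larger terms. This does not break the reduction step, but it shows your stated reason is off. The genuine difficulty is elsewhere: condition (2) is a statement about $T(r)$, and $T(r)$ \emph{does} depend on the representation. So it is not enough to produce the element $r$ as an $\m$-adic limit; you must simultaneously produce a representation of $r$ whose support avoids the multiples of $LT(S)$. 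The paper's ``single adjustment'' to Becker is exactly this: as the partial remainders $r_\alpha$ are built, one freezes $c(t_\beta,r_\alpha)=c(t_\beta,r_\beta)$ for all $\alpha>\beta$, so that the representations themselves converge to a representation of $r$ with the desired support. Your ``strip the term into $r$'' step does this implicitly, but you should say so explicitly rather than asserting that non-uniqueness ``causes no difficulty''; as written, your justification does not establish (2).
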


For the proof of the theorem, we refer to Becker \cite{B}.
The version stated here differs from that which appears in \cite{B} in two respects.
We have weakened the hypothesis - Becker assumes $R$ contains a field; we have correspondingly weakened the conclusion - we do not assert the Hironaka remainder $r$ is unique.
In fact, the remainder will not be unique in the mixed characteristic case.
However, with a single adjustment, Becker's proof goes through exactly as it does in the equal characteristic case.
In the proof of Proposition 2.1, Becker inductively constructs the remainder $r$ and we must simultaneously construct a power series representation for $r$ as $T(r)$ depends upon the choice of representation.
Intuitively, as we build $r$ as a limit, we adjust the representation as little as possible so that we also have a limit of the representations.
To be precise, in Becker's notation, for all $\alpha>\beta$, we set $c(t_\beta,r_\alpha)=c(t_\beta,r_\beta)$ and so we get a representation with $c(t_\beta,r)=c(t_\beta,r_\beta)$.

\begin{defn}
Let $y_1,\dots,y_m$ be a standard basis for an ideal $I$.
A basis element $y_i$ is called superfluous if $y_i\in \m I+(y_{i+1},\dots,y_m)R$.
\end{defn}

\begin{lemma}
Let $y_1,\dots,y_m$ be a standard basis for an ideal $I$.
The non-superfluous elements in the standard basis comprise a minimal generating set for $I$ and so $\mu(I)=m-\ell$ where $\ell$ is the number of superfluous elements.
\end{lemma}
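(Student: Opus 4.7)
The plan is to split the statement into two parts: (i) the non-superfluous $y_i$'s generate $I$, and (ii) their images are linearly independent in $I/\mathfrak{m}I$. Together these say they form a minimal generating set, and then $\mu(I)=\dim_k I/\mathfrak{m}I=m-\ell$ is immediate. Note that the standard basis property itself will not be needed: only the definition of superfluous and Nakayama's lemma.

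For generation, I would argue by downward induction on $i$ (from $i=m$ to $i=1$) that
\[
y_i \;\in\; \mathfrak{m}I + \sum_{j\ge i,\ y_j \text{ non-superfluous}} R\,y_j.
\]
The base case $i=m$ is clear: either $y_m$ is non-superfluous (trivial) or $y_m\in\mathfrak{m}I+(\emptyset)R=\mathfrak{m}I$. In the inductive step, if $y_i$ is non-superfluous, there is nothing to prove; if $y_i$ is superfluous, then $y_i=z+\sum_{l>i}r_l y_l$ with $z\in\mathfrak{m}I$, and we apply the inductive hypothesis to each $y_l$ to absorb every term into $\mathfrak{m}I$ plus the $R$-span of non-superfluous generators with index $>i$. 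Applied at $i=1,\ldots,m$, this gives $I\subseteq \mathfrak{m}I+(y_{j_1},\ldots,y_{j_k})R$, where $\{j_1<\cdots<j_k\}$ are the non-superfluous indices, so Nakayama yields $I=(y_{j_1},\ldots,y_{j_k})R$.

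For minimality, I would argue by contradiction. Suppose the residue classes $\bar y_{j_1},\ldots,\bar y_{j_k}$ are linearly dependent over $k$, so there exist $\bar c_s\in k$, not all zero, with $\sum_s \bar c_s \bar y_{j_s}=0$. Lift to $c_s\in R$ (zero whenever $\bar c_s=0$, a unit otherwise) to get $\sum_s c_s y_{j_s}\in\mathfrak{m}I$. The crucial choice is to let $s^\ast$ be the index with $j_{s^\ast}=\min\{j_s:c_s\neq 0\}$, so that every other nonzero term on the left has index strictly greater than $j_{s^\ast}$. Multiplying through by the unit $c_{s^\ast}^{-1}$ gives
\[
y_{j_{s^\ast}} \;\in\; \mathfrak{m}I + \sum_{s>s^\ast,\ c_s\neq 0} R\,y_{j_s} \;\subseteq\; \mathfrak{m}I + (y_{j_{s^\ast}+1},\ldots,y_m)R,
\]
which makes $y_{j_{s^\ast}}$ superfluous, contradicting its choice.

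The only subtle point — and the thing that could trip one up — is exactly this final step: because the definition of ``superfluous'' is one-sided (using only indices $>i$), one must choose $s^\ast$ to be the \emph{smallest} index appearing with a nonzero coefficient, not the largest. Picking the largest gives a relation of $y_{j_{s^\ast}}$ in terms of \emph{smaller}-indexed non-superfluous elements, which is not enough to contradict non-superfluousness as defined here.
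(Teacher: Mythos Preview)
Your proof is correct and follows essentially the same approach as the paper: the paper also works modulo $\mathfrak{m}I$, removes superfluous elements left to right (equivalent to your downward induction) to get a spanning set, and then derives a contradiction for linear independence by looking at the leftmost nonzero coefficient exactly as you do. Your write-up is slightly more formal, but the ideas coincide.
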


\begin{proof}
It suffices to consider the vector space $V=I/\m I$ in place of $I$.
Obviously $\bar{y}_1,\dots,\bar{y}_m$ span $V$.
Starting from the left, if $y_j$ is superfluous, we have that $\bar{y_j}$ is in the span of $\{\bar{y}_q\mid q>j\}$ and so we still have a spanning set with $\bar{y}_j$ removed.
Moreover, as $y_j$ was the first superfluous element, the removal does not affect the superfluousness of the other elements.
Thus in turn we may remove all superfluous elements and still have a generating set.
Finally, to see that the remaining elements are linearly independent, we consider the equation $\sum c_i\bar{y}_i=0$ and note that if $c_j$ is the leftmost nonzero coefficient, $y_j$ must be superfluous, a contradiction.
\end{proof}

\begin{prop}\label{1.4}
Let $J$ be a perfect ideal of grade $g$.
Suppose $x_1,\dots,x_d$ is a system of parameters such that $x_{g+1},\dots,x_d$ comprise a system of parameters for $R/J$.
Let $f_1,\dots,f_m$ be a standard basis for $J$.
Then the leading terms of the standard basis elements are monomials of the form $x_1^{e_1}\dots x_g^{e_g}$.
Moreover, if $x_1^{e_1}\dots x_g^{e_g}$, $x_1^{c_1}\dots x_g^{c_g}$ are two different leading terms, $e_i\neq c_i$ for some $i<g$.
\end{prop}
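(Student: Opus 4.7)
My plan is to exploit two facts in tandem. First, the chosen lex order ranks every monomial divisible by some $x_j$ with $j>g$ strictly above any monomial in $k[x_1,\dots,x_g]$, and more generally sorts by $(e_d,\dots,e_{g+1})$ before anything else; so the leading-term position of a monomial with any exponent in $x_{g+1},\dots,x_d$ forces every other term in a valid representation to lie in $(x_{g+1},\dots,x_d)$ as well. Second, $R/J$ is Cohen-Macaulay because $J$ is perfect in a regular local ring, so the hypothesized system of parameters $x_{g+1},\dots,x_d$ on $R/J$ is actually a regular sequence on $R/J$. The first fact translates statements about leading terms into statements about ideal membership, and the second lets me pull that membership back through $J$ to produce a smaller leading term.

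For part (a), I argue by contradiction. Passing (via the preceding lemma on superfluous elements) to a minimal standard basis, suppose $LT(f_i)=x_1^{e_1}\cdots x_d^{e_d}$ with $e_j>0$ for some $j>g$. By the first observation above, every monomial $t_\alpha$ appearing in a representation of $f_i$ lies in $(x_{g+1},\dots,x_d)$, so $f_i\in J\cap(x_{g+1},\dots,x_d)$. The Koszul complex on the regular sequence $x_{g+1},\dots,x_d$ acting on $R/J$ gives $\Tor_1^R(R/(x_{g+1},\dots,x_d),R/J)=0$, which yields the standard identity $J\cap(x_{g+1},\dots,x_d)=J\cdot(x_{g+1},\dots,x_d)$. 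So $f_i=\sum_{j>g}x_j h_j$ with $h_j\in J$. Admissibility of the order (which makes $LT(x_j h_j)=x_j LT(h_j)$) forces $LT(f_i)=x_{j_0}LT(h_{j_0})$ for some $j_0>g$, whence $LT(h_{j_0})$ properly divides $LT(f_i)$, contradicting the incomparability of leading terms in a minimal standard basis.

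For part (b), I invoke minimality once more. If $x_1^{e_1}\cdots x_g^{e_g}$ and $x_1^{c_1}\cdots x_g^{c_g}$ are two distinct leading terms that agree in $e_i=c_i$ for every $i<g$, then they differ only in the $x_g$-exponent, so one of them divides the other, again contradicting the incomparability of minimal standard basis leading terms.

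The main obstacle is justifying $LT(f_i)=x_{j_0}LT(h_{j_0})$ in the mixed characteristic case. In equal characteristic, representations are unique and no cancellation occurs in $\sum_{j>g}x_j h_j$, so $LT(f_i)$ is literally the order-minimum of the $x_j LT(h_j)$ and the identification is immediate. In mixed characteristic, the coefficient units attached to a common monomial $x_j t_\alpha=x_{j'}t_{\alpha'}$ can combine into a non-unit, or cancel entirely, so I would need to assemble a consistent representation of $f_i$ from the representations of the $h_j$'s (mirroring the Becker-style construction used elsewhere in the paper) and verify, using admissibility, that at least one $x_{j_0}LT(h_{j_0})$ still divides $LT(f_i)$. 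Once that leading-term bookkeeping is in place, the rest of the proof is formal.
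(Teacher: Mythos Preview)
Your $\Tor$-vanishing step is correct and elegant: regularity of $x_{g+1},\dots,x_d$ on $R/J$ does give $J\cap(x_{g+1},\dots,x_d)=J\,(x_{g+1},\dots,x_d)$, so $f_i=\sum_{j>g}x_jh_j$ with $h_j\in J$. The gap is in what follows. Your assertion that ``no cancellation occurs in $\sum_{j>g}x_jh_j$'' is simply false, already in equal characteristic: with $g=2$, $d=4$, take $h_3=x_4$ and $h_4=-x_3+x_4$; then $f_i=x_4^2$, while $x_3\,LT(h_3)=x_4\,LT(h_4)=x_3x_4$, so $LT(f_i)$ is strictly larger than $\min_j x_j\,LT(h_j)$ and equals neither $x_j\,LT(h_j)$. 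All you can extract from the decomposition is an inequality $LT(f_i)\ge\min_j x_j\,LT(h_j)$ in the term order, and an inequality in a term order does not produce the divisibility relation you need to contradict minimality of the standard basis. The ``bookkeeping'' you propose for mixed characteristic will not repair this, because the problem is already present over a field.

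The paper does not attempt to control the leading term of a multi-term sum. Instead it picks the minimal $c>g$ with $f_i\in(x_c,\dots,x_d)$ (so that $LT(f_i)\notin(x_{c+1},\dots,x_d)$), and then runs a short induction: writing $f_i=s+x_jt$ with $s\in(x_c,\dots,x_{j-1})$, regularity of $x_c,\dots,x_j$ on $R/J$ lets one split $t=s'+t'$ with $t'\in J$, and $\tilde f_i=f_i-x_jt'$ has the \emph{same} leading term (because $LT(x_jt')\in(x_{c+1},\dots,x_d)$ while $LT(f_i)$ is not) but lies in $(x_c,\dots,x_{j-1})$. Iterating drives $f_i$ into $x_cR$, at which point regularity of the single element $x_c$ gives $f_i=x_ch$ with $h\in J$, and now $LT(h)=LT(f_i)/x_c$ is a genuine proper divisor. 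Reducing to one variable is exactly what makes the leading-term step go through; your $\Tor$ shortcut bypasses that reduction and then cannot finish. Your argument for the ``moreover'' clause matches the paper's.
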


\begin{proof}
First note that because $J$ is perfect, the depth of $R/J$ must be $d-g$ and so $x_{g+1},\dots,x_d$ is a regular sequence on $R/J$.
Now, if the first conclusion of the proposition were false, there must be an element $f_i$ in the standard basis with $f_i\in (x_{g+1},\dots,x_d)R$.
Choose $c>g$ minimal such that $f_i\in (x_c,\dots,x_d)R$.
So $LT(f_i)\notin (x_{c+1},\dots,x_d)R$.
Now choose $j$ minimal so that $f_i\in (x_c,\dots,x_j)R$.
If $j=c$, $f_i=x_ch$ for some $h\in R$ and as $x_c$ is regular on $R/J$, $h\in J$.
This contradicts the notion that $f_i$ is part of a standard basis.
So $j>c$.
Write $f_i=s+x_jt$ with $s\in (x_c,\dots,x_{j-1})R$.
As $x_c,\dots, x_j$ is a regular sequence on $R/J$, we may write $t=s'+t'$ with $s'\in (x_c,\dots,x_{j-1})R$ and $t'\in J$.
Let $\tilde{f}_i=f_i-x_jt'=s+x_js'$.
Then $\tilde{f}_i\in J$, $LT(\tilde{f}_i)=LT(f_i)$, and so we may replace $f_i$ by $\tilde{f}_i$ in our standard basis.
But $\tilde{f}_i\in (x_c,\dots,x_{j-1})R$ and by induction, we obtain a contradiction.

The moreover statement is obvious.  
If $e_i=c_i$ for all $i<g$, then one of the monomials must divide the other and so the two corresponding elements cannot be members of the same standard basis.
\end{proof}

\begin{prop}\label{Lemma 487}
Let $J$ be a perfect ideal of grade $g$.
Let $x_1,\dots,x_d$ be a regular system of parameters such that $x_{g+1},\dots,x_d$ comprise a system of parameters for $R/J$.
Suppose $(x_1,\dots,x_{g-1})^\ell R\subset \m J+\m^{\ell+1}+(x_g,\dots,x_d)R$.
Then $\mu(J)\leq \begin{pmatrix} g+\ell-2\\ g-1 \end{pmatrix}$.
\end{prop}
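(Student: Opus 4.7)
The plan is to reduce the problem to an Artinian length calculation, first by modding out the regular sequence $x_{g+1},\dots,x_d$ and then by modding out $x_g$. Since $J$ is perfect and $x_{g+1},\dots,x_d$ is a system of parameters for the Cohen-Macaulay ring $R/J$, this sequence is regular on $R/J$ (and on $R$). Setting $R'=R/(x_{g+1},\dots,x_d)R$ and $J'=JR'$, the vanishing of $\Tor_1^R(R/J,R')$ gives $J'\cong J\otimes_R R'$, hence $\mu(J)=\mu(J')$; the ideal $J'$ is $\m'$-primary and perfect of grade $g$ in the regular local ring $R'$ of dimension $g$, and the hypothesis descends verbatim to $(x_1,\dots,x_{g-1})^\ell R'\subseteq\m'J'+\m'^{\ell+1}+x_gR'$. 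So I may assume $d=g$ from here on.

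Next, I would pass to $\bar R=R'/x_gR'$, a regular local ring of dimension $g-1$ with $\bar\m=(x_1,\dots,x_{g-1})\bar R$, and let $\bar J$ denote the image of $J'$. Reducing the hypothesis modulo $x_gR'$ gives $\bar\m^\ell\subseteq\bar\m\bar J+\bar\m^{\ell+1}$, and since $\bar\m^{\ell+1}=\bar\m\cdot\bar\m^\ell$, Nakayama's lemma upgrades this to $\bar\m^\ell\subseteq\bar\m\bar J$. Consequently $\bar R/\bar\m\bar J$ is a quotient of $\bar R/\bar\m^\ell$, whose length is $\binom{g+\ell-2}{g-1}$.

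The heart of the argument is the identity $\mu(J')=\mu(\bar J)+\Delta$, where $\Delta=\dim_k(J'\cap x_gR')/(J'\cap x_gR'\cap\m'J')$ arises from the modular law applied to the kernel of the natural surjection $J'/\m'J'\twoheadrightarrow\bar J/\bar\m\bar J$. Multiplication by $x_g$ yields an $R'$-module isomorphism $(J':x_g)\xrightarrow{\sim}J'\cap x_gR'$ (injective because $R'$ is a domain) that sends $(\m'J':x_g)$ onto $J'\cap x_gR'\cap\m'J'$, so $\Delta=\dim_k(J':x_g)/(\m'J':x_g)$. The inclusion $J'\subseteq(\m'J':x_g)$ (since $x_gJ'\subseteq\m'J'$) exhibits this quotient as a quotient of $(J':x_g)/J'$, giving $\Delta\leq\length((J':x_g)/J')$. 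In the Artinian ring $R'/J'$, the four-term exact sequence
\[0\to(J':x_g)/J'\to R'/J'\xrightarrow{\;\cdot x_g\;}R'/J'\to\bar R/\bar J\to 0\]
then forces $\length((J':x_g)/J')=\length(\bar R/\bar J)$.

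Putting everything together,
\[\mu(J)=\mu(J')\leq\mu(\bar J)+\length(\bar R/\bar J)=\length(\bar R/\bar\m\bar J)\leq\length(\bar R/\bar\m^\ell)=\binom{g+\ell-2}{g-1}.\]
The main obstacle is the third paragraph: correctly extracting the identity $\mu(J')=\mu(\bar J)+\Delta$ via the modular law and the $x_g$-multiplication isomorphism, and then recognizing that $\Delta$ is controlled by the torsion length $\length((J':x_g)/J')$. Once that module-theoretic step is in place, the proof closes by pure Artinian length bookkeeping; interestingly, neither the novel term order nor the standard-basis machinery developed in Section~1 appears to be required for this particular statement.
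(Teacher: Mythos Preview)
Your argument is correct, and it is genuinely different from the paper's. The paper works with a standard basis of $J$ with respect to the pure lexicographic order: by Proposition~\ref{1.4} the leading terms are monomials $x_1^{e_1}\cdots x_g^{e_g}$, distinct basis elements have distinct $(e_1,\dots,e_{g-1})$, and the hypothesis (after the same Nakayama improvement you use) forces every element with $\sum_{i<g}e_i\geq\ell$ to be superfluous; the bound then falls out as the count of $(g-1)$-tuples with coordinate sum below $\ell$. By contrast, you never touch a term order. You collapse to the $\m'$-primary case, peel off $x_g$, and control the discrepancy $\mu(J')-\mu(\bar J)$ by the torsion length $\length((J':x_g)/J')$, which the Artinian four-term sequence identifies with $\length(\bar R/\bar J)$; the inequality $\mu(\bar J)+\length(\bar R/\bar J)=\length(\bar R/\bar\m\bar J)\leq\length(\bar R/\bar\m^\ell)$ then closes the loop.

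What each approach buys: the paper's route is tailored to the standard-basis framework it has already built and feeds directly into the later arguments of Sections~2--3, where counting leading terms $x_1^{e_1}x_2^{e_2}$ is exactly the mechanism used. Your route is self-contained, uses only Nakayama, the modular law, and Artinian length additivity, and makes transparent why the bound is $\length(\bar R/\bar\m^\ell)$; it would go through in any setting where $J'$ is $\m'$-primary in a regular local ring, without reference to Hironaka representations. Your closing remark that the standard-basis machinery is not required for this particular statement is on the mark.
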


\begin{proof}
First we improve the hypothesis.
Since $x_1,\dots,x_d$ generate $\m$, the hypothesis implies $\m^\ell\subset \m J+\m^{\ell+1}+(x_g,\dots,x_d)R$.
Then, by Nakayama's Lemma, $(x_1,\dots,x_{g-1})^\ell\subset\m^\ell\subset\m J+(x_g,\dots,x_d)R$.

Using our parameters $x_1,\dots,x_d$, we deviate from the rest of this article and order monomials in the usual way.
To wit, we order the monomials $x_1^{e_1}\dots x_d^{e_d}$ using lexicographic order on the $d$-tuples $(e_d,\dots,e_1)$.
We choose a standard basis $f_1,\dots, f_m$ for $J$, numbered so that $LT(f_i)<LT(f_j)$ whenever $i<j$.
 Let $f=f_i$ be an element of that basis.
By the previous proposition, $LT(f)=x_1^{e_1}\dots x_g^{e_g}$.
Let $q=\sum_{i=1}^{g-1}e_i$.
We claim that if $q\geq \ell$, $f$ is a superfluous generator.
Now note that $q\geq \ell$ implies that $LT(f)\in (x_1,\dots,x_{g-1})^\ell R$.
By the nature of our ordering, this tells us that every term in the Hironaka representation of $f$ - and so $f$ itself - is in $x_g^{e_g}((x_1,\dots,x_{g-1})R^\ell + (x_g,\dots,x_d))R$.
Thus $f\in\m J+(x_g^{e_g+1},\dots,x_d)R$ by the hypothesis.
Hence, modulo $\m J$, $f\equiv s_1$ with $s_1\in (x_g^{e_g+1},\dots,x_d)R\cap J$ and so in particular $LT(f)<LT(s_1)$.

Since modulo $(x_{g+1},\dots,x_d)R$, $\bar{J}\cap \m^h\subset \m \bar{J}$ for sufficiently large $h$, the set of leading terms of elements in $J-\m J$ is finite and so we can find maximal elements.
Necessarily there exists $f_j$ such that $LT(f_j)\mid LT(s_1)$.
We can then write $s_1=a_1f_j+s_2$ with $LT(s_1)<LT(s_2)$.
If $a_1$ is not a unit, $s_2$ is congruent to $s_1$ modulo $\m J$.  
If $a_1$ is a unit, $LT(f_j)>LT(f)$ and so $j>i$.
We repeat the process with $s_2$, noting $s_2=a_2f_\ell+s_3$ and either $\ell>i$ or $a_2f_\ell\in \m J$.
Since we cannot find infinitely many $s_j\in J- \m J$, for sufficiently large $N$, we must have $s_N\in \m J$.
Thus $f=f_i\in \m J+(f_{i+1},\dots,f_m)R$ and $f_i$ is a superfluous generator and the claim is shown.

To obtain an upper bound on $\mu(J)$, it suffices to obtain an upper bound on the number of generators which are not superfluous.
There is at most one generator for any $(g-1)$-tuple $(e_1,\dots, e_{g-1})$ and that generator is superfluous unless $\sum_{i=1}^{g-1}e_i<\ell$.
Standard combinatorics now tell us that the number of non-superfluous generators is at most $ \begin{pmatrix} g+\ell-2\\ g-1 \end{pmatrix}$.
\end{proof}

\section{the main results}

We first establish Conjecture \ref{E1} in the case where $g=\dim R$.

\begin{lemma}\label{socle}
Let $(R,\m,k)$ be a regular local ring and $I$ an ideal of $R$.
Then $\lambda (I\cap \m^{n-1}+\m^n/\m^n)\geq \mu(I+\m^n/\m^n)$.
\end{lemma}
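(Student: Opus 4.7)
The plan is to translate both sides of the inequality into statements about the associated graded ideal $\mathrm{gr}_\m I := \bigoplus_j (I\cap\m^j)/(I\cap\m^{j+1})$ sitting inside the polynomial ring $\mathrm{gr}_\m R$, and then to run a telescoping count.

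I would first identify both sides. On the left side of the inequality, $(I\cap\m^{n-1}+\m^n)/\m^n$ is annihilated by $\m$, so (via the third isomorphism theorem) its length equals $\dim_k(I\cap\m^{n-1})/(I\cap\m^n)=\dim_k(\mathrm{gr}_\m I)_{n-1}$. On the right side, Nakayama gives $\mu(I+\m^n/\m^n)=\dim_k V$, where $V:=(I+\m^n)/(\m I+\m^n)$.

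Next I would filter $V$ by letting $V_j$ be the image of $I\cap\m^j$ in $V$, so that $V_0=V$, $V_n=0$, and $\dim_k V=\sum_{j=0}^{n-1}\dim_k V_j/V_{j+1}$. The key step is a natural surjection $(\mathrm{gr}_\m I)_j/\m_1(\mathrm{gr}_\m I)_{j-1}\twoheadrightarrow V_j/V_{j+1}$, where $\m_1:=\m/\m^2$. Indeed, the composition $I\cap\m^j\to V_j\to V_j/V_{j+1}$ obviously kills $I\cap\m^{j+1}$, and it also kills $\m(I\cap\m^{j-1})$, because this submodule lies in $\m I$ and so maps to zero in $V$. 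This yields $\dim_k V_j/V_{j+1}\le \dim_k(\mathrm{gr}_\m I)_j-\dim_k\m_1(\mathrm{gr}_\m I)_{j-1}$.

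Finally, since $R$ is regular, $\mathrm{gr}_\m R$ is a polynomial ring over $k$, so multiplication by any element $\bar{x}\in\m/\m^2$ coming from a regular parameter is injective on the submodule $\mathrm{gr}_\m I$. Hence $\dim_k\m_1(\mathrm{gr}_\m I)_{j-1}\ge\dim_k \bar{x}(\mathrm{gr}_\m I)_{j-1}=\dim_k(\mathrm{gr}_\m I)_{j-1}$, and the sum telescopes:
$\mu(I+\m^n/\m^n)\le\sum_{j=0}^{n-1}\bigl[\dim_k(\mathrm{gr}_\m I)_j-\dim_k(\mathrm{gr}_\m I)_{j-1}\bigr]=\dim_k(\mathrm{gr}_\m I)_{n-1}=\lambda(I\cap\m^{n-1}+\m^n/\m^n)$.
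The only real content is establishing the middle surjection above; once one notices that $\m(I\cap\m^{j-1})$ already lies inside $\m I$, the rest is bookkeeping with the $\m$-adic filtration.
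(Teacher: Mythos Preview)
Your proof is correct, and it takes a genuinely different route from the paper's. The paper argues by induction on $i$ under the auxiliary hypothesis $I\subseteq\m^{n-i}$: at the inductive step it splits $I=I'+I''$ with $I'$ an $\ell$-generated ideal realizing $\mu(I+\m^{n-1}/\m^{n-1})$ and $I''\subseteq I\cap\m^{n-1}$, obtains a direct-sum decomposition of $(I+\m^n)/\m^n$, and then pushes $I'$ one step deeper by multiplying by a fixed $t\in\m\setminus\m^2$, using that $\mu(tI'+\m^n/\m^n)=\ell$ to invoke the inductive hypothesis. Your argument instead works entirely inside $\mathrm{gr}_\m R$: you filter $V=(I+\m^n)/(\m I+\m^n)$ by the images $V_j$ of $I\cap\m^j$, bound each $\dim_k V_j/V_{j+1}$ by $\dim_k(\mathrm{gr}_\m I)_j-\dim_k\m_1(\mathrm{gr}_\m I)_{j-1}$, and telescope.

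Both proofs ultimately rest on the same fact---that $\mathrm{gr}_\m R$ is a polynomial ring, so multiplication by the initial form of a regular parameter is injective---but they package it differently. The paper's ``multiply by $t$'' step is exactly your injectivity of $\bar x$ on $\mathrm{gr}_\m I$, disguised. Your version is cleaner and more structural: it avoids the ad hoc splitting $I'+I''$ and the direct-sum verification, and it makes transparent that the bound is really $\dim_k(\mathrm{gr}_\m I)_{n-1}$. The paper's version, on the other hand, is self-contained at the level of ideals in $R$ and does not require setting up the associated graded formalism.
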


\begin{proof}
We shall prove the lemma under the assumption that $I\subseteq \m^{n-i}$ for each $i\leq n$ by induction on $i$ and the $i=n$ case is the full lemma.
For $i=1$, we have $\mu(I+\m^n/\m^n)=\mu(I\cap\m^{n-1}+\m^n/\m^n)=\lambda (I\cap \m^{n-1}+\m^n/\m^n)$ and the conclusion holds.
For $i>1$, let $\ell=\mu(I+\m^{n-1}/\m^{n-1})$ and let $I'\subseteq I$ be an $\ell$-generated ideal such that $I'+\m^{n-1}=I+\m^{n-1}$.
Next choose an ideal $I''\subseteq I\cap\m^{n-1}$ minimal with respect to the property that $I'+I''=I$.
We now have a direct sum $I+\m^n/\m^n=I'+\m^n/\m^n\oplus I''+\m^n/\m^n$.
Next choose $t\in\m-\m^2$.
We have $\ell=\mu(tI'+\m^n/\m^n)$.
As $tI'\subset  \m^{n-(i-1)}$ and $I''\subset \m^{n-1}$, the induction assumption gives

$$   \mu(I+\m^n/\m^n)=\mu(I'+\m^n/\m^n)+\mu(I''+\m^n/\m^n)   $$
$$=\mu(tI'+\m^n/\m^n)+\mu(I''+\m^n/\m^n)$$
$$\leq \lambda(tI'\cap\m^{n-1}+\m^n/\m^n)+\lambda(I''\cap\m^{n-1}+\m^n/\m^n)$$
$$=\lambda((tI'+I'')\cap\m^{n-1}+\m^n/\m^n)\leq\lambda(I\cap\m^{n-1}+\m^n/\m^n)     $$
\end{proof}

\begin{prop}\label{g=d}
Conjecture \ref{E1} is true when $g=\dim R$.
\end{prop}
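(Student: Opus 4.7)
The plan is to apply Lemma~\ref{socle} directly and then bound length by a trivial containment. When $g = \dim R = d$, the target bound $\binom{g+n-2}{g-1} = \binom{d+n-2}{d-1}$ is exactly the $k$-vector space dimension of $\m^{n-1}/\m^n$, since the monomials of degree $n-1$ in any regular system of parameters form a basis of this graded piece.

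First I would apply Lemma~\ref{socle} with $I = J$ to obtain
$$\mu(J+\m^n/\m^n) \leq \lambda((J\cap\m^{n-1}+\m^n)/\m^n).$$
Next I would observe that $(J\cap\m^{n-1}+\m^n)/\m^n$ is a submodule of $\m^{n-1}/\m^n$, so
$$\lambda((J\cap\m^{n-1}+\m^n)/\m^n) \leq \lambda(\m^{n-1}/\m^n) = \binom{d+n-2}{d-1}.$$
Chaining these two inequalities delivers the desired bound.

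There is no real obstacle here; the substantive work has already been done in Lemma~\ref{socle}. It is worth noting that the argument never uses the perfectness of $J$ — the same inequality holds for any ideal of $R$ when $g = d$ — which is natural because the conjectural bound $\binom{g+n-2}{g-1}$ attains its maximum possible value $\lambda(\m^{n-1}/\m^n)$ precisely at $g = d$. The genuinely difficult part of Conjecture~\ref{E1} is thus the case $g < d$, where the target bound is strictly smaller than $\lambda(\m^{n-1}/\m^n)$ and the chain above is too crude.
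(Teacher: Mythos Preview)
Your proof is correct and follows essentially the same route as the paper: apply Lemma~\ref{socle} and bound the right-hand side by $\lambda(\m^{n-1}/\m^n)=\binom{d+n-2}{d-1}$. Your remark that perfectness is never invoked here is accurate and worth keeping.
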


\begin{proof}
It is well known that a simple calculation gives $\lambda(\m^{n-1}/\m^n)=\begin{pmatrix} g+n-2\\ g-1 \end{pmatrix}$ and the result then follows immediately from Lemma~\ref{socle}.
\end{proof}

Next we present an example which shows that neither conjecture is true as stated in the introduction.
In fact, without additional hypotheses, even in the case $g=n=3$, there is no bound on the number of generators which is independent of the dimension of the ring.
The key feature of the example is that $I\subset x_1R+\m^n$.

\begin{eg}
Let $K$ be a field and let $x_1,x_2,x_3,y_0,y_1,\dots,y_N$ be indeterminates with $N\geq 3$.
Let $R=K[X,Y]$ be the the $(N+4)$-dimensional polynomial ring with $\m$ the maximal ideal generated by the indeterminates.
Suppose $I=(x_1^2,x_1x_2,x_1x_3,x_1y_0+x_2^N,x_1y_1+x_2^{N-1}x_3,\dots,x_1y_N+x_3^N)R$.
Then $I$ is a height three perfect ideal such that $\mu(I)=\mu(I+\m^3/\m^3)=N+4$.
\end{eg}

\begin{proof}
Obviously $\mu(I+\m^3/\m^3)\leq \mu(I)\leq N+4$.
However $I+\m^3/\m^3=x_1\m/\m^3\cong \m/\m^2$ and so $\mu(I+\m^3/\m^3)=\mu(\m)=N+4$.
This proves the second conclusion.
As $x_1^2,x_1y_0+x_2^N,x_1y_N+x_3^N\in I$, it is clear that $I\subseteq (x_1,x_2,x_3)R=\sqrt{I}$ and so $I$ has height three.
As $R$ is regular, $R/I$ must have finite projective dimension and so the proof will be complete if we can show that $y_0,\dots,y_N$ is a regular sequence on $R/I$.

Suppose $y_0r\in I$, that is, $$y_0r=a_1x_1^2+a_2x_1x_2+a_3x_1x_3+\sum_{j=0}^Nb_j(x_1y_j+x_2^{N-j}x_3^j).$$
If all $b_j$ are zero, the fact that $y_0,x_1,x_2,x_3$ is a regular sequence in $R$ forces $r\in(x_1^2,x_1x_2,x_1x_3)R\subset I$.
Otherwise, we have some $\ell\geq 0$ chosen to be minimal over all expressions for $s\in r+I$ such that $b_\ell\neq 0$ and
$$y_0s=a_1x_1^2+a_2x_1x_2+a_3x_1x_3+\sum_{j=0}^\ell b_j(x_1y_j+x_2^{N-j}x_3^j).$$
Again using the regularity of $y_0,x_1,x_2,x_3$, $b_\ell x_2^{N-\ell}x_3^\ell\in (y_0,x_1,x_2^{N-\ell+1})R$ forces
$b_\ell\in (y_0,x_1,x_2)R$.
If $\ell=0$, we actually get $b_\ell\in (y_0,x_1)R$.
We write $b_\ell=y_0c_0+x_1c_1+x_2c_2$.
Since $x_1(x_1y_\ell+x_2^{N-\ell}x_3^\ell)=y_\ell x_1^2+x_1x_2^{N-\ell}x_3^\ell\in (x_1^2,x_1x_2,x_1x_3)R$, we may reduce to the case $c_1=0$.
For $\ell>0$, $x_2(x_1y_\ell+x_2^{N-\ell}x_3^\ell)=y_\ell(x_1x_2)+x_3(x_1y_{\ell-1}+x_2^{N-\ell+1}x_3^{\ell-1})-y_{\ell-1}x_1x_3$ and so we also can reduce to the case $c_2=0$.
(We already had $c_2=0$ in the $\ell=0$ case.)
Finally, we may replace $s$ by $s'=s-c_0(x_1y_\ell+x_2^{N-\ell}y_3^\ell)\in r+I$ and so we can reduce to the case $b_\ell=0$, the desired contradiction which shows $y_0$ is regular on $R/I$.

Now suppose $i>0$ and $y_ir\in I+(y_0,\dots,y_{i-1})R$.
We proceed as above, working in the ring $K[x_1,x_2,x_3,y_i,\dots,y_N]$.
So we have
$$y_ir=a_1x_1^2+a_2x_1x_2+a_3x_1x_3+\sum_{j=0}^{i-1}b_j(x_2^{N-j}x_3^j)+\sum_{j=i}^Nb_j(x_1y_j+x_2^{N-j}x_3^j).$$
If $b_j=0$ for all $j\geq i$, the fact that $y_i,x_1,x_2,x_3$ is a regular sequence forces $r\in(x_1^2,x_1x_2,x_1x_3,x_2^N,\dots,x_2^{N-i+1}x_3^{i-1})R\subset I$.
Otherwise, we have some $\ell\geq i$ chosen to be minimal over all expressions for $s$ for $s\in r+I$ such that $b_\ell\neq 0$ and
$$y_is=a_1x_1^2+a_2x_1x_2+a_3x_1x_3+\sum_{j=0}^{i-1}b_j(x_2^{N-j}x_3^j)+\sum_{j=i}^\ell b_j(x_1y_j+x_2^{N-j}x_3^j).$$
Again using the regularity of $y_i,x_1,x_2,x_3$, $b_\ell x_2^{N-\ell}x_3^\ell\in (y_0,x_1,x_2^{N-\ell+1})R$ forces
$b_\ell\in (y_0,x_1,x_2)R$.
We write $b_\ell=y_0c_0+x_1c_1+x_2c_2$.
Since $x_1(x_1y_\ell+x_2^{N-\ell}x_3^\ell)=y_\ell x_1^2+x_1x_2^{N-\ell}x_3^\ell\in (x_1^2,x_1x_2,x_1x_3)R$, we may reduce to the case $c_1=0$.
For $\ell>i$, $x_2(x_1y_\ell+x_2^{N-\ell}x_3^\ell)=y_\ell(x_1x_2)+x_3(x_1y_{\ell-1}+x_2^{N-\ell+1}x_3^{\ell-1})-y_{\ell-1}x_1x_3$.
For $\ell=i$, $x_2(x_1y_\ell+x_2^{N-\ell}x_3^\ell)=y_\ell(x_1x_2)+x_3(x_2^{N-\ell+1}x_3^{\ell-1})$.
Thus we can also reduce to the case $c_2=0$.
Finally, we may replace $s$ by $s'=s-c_0(x_1y_\ell+x_2^{N-\ell}y_3^\ell)\in r+I$ and so we can reduce to the case $b_\ell=0$, the desired contradiction which completes the proof.
\end{proof}

As noted above, the key element of the example is the existence of an element $y\in R$ such that $I\subseteq yR+\m^n$.
For $g=4$, we can modify this example by adding $x_4^2$ to the list of generators for $I$.
Here we have $y_1,y_2\in R$ with $I\subseteq (y_1,y_2)+\m^n$.
And so on for larger $g$.
To circumvent these examples, we adjust the conjectures.

\begin{conj}\label{C1}
 Let $(R,\m)$ be a local ring and suppose $J$ is a perfect ideal of grade $g$.
 Further suppose there does not exist a height $g-2$ ideal $I$ such that $J\subseteq I+\m^n$.
Then $\mu(J+\m^n/\m^n)\leq \begin{pmatrix} g+n-2\\ g-1 \end{pmatrix}$.
\end{conj}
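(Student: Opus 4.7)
The plan is to reduce Conjecture~\ref{C1} to Proposition~\ref{g=d} by passing to an Artinian quotient. Since $R/J$ is Cohen-Macaulay of dimension $d-g$ and the residue field is infinite, I can choose generically a system of parameters $x_{g+1},\dots,x_d$ for $R/J$ and extend it to a regular system of parameters for $R$. Setting $\bar R:=R/(x_{g+1},\dots,x_d)R$ and $\bar J:=(J+(x_{g+1},\dots,x_d)R)/(x_{g+1},\dots,x_d)R$, the ring $\bar R$ is regular local of dimension $g$ and $\bar J$ is perfect of grade $g=\dim\bar R$, so Proposition~\ref{g=d} yields
\[
  \mu(\bar J+\bar\m^n/\bar\m^n)\;\leq\;\binom{g+n-2}{g-1}.
\]

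Modding out only reduces generator counts, so the natural map
\[
  \phi:\frac{J+\m^n}{\m J+\m^n}\twoheadrightarrow\frac{\bar J+\bar\m^n}{\bar\m\bar J+\bar\m^n}
\]
is surjective, and its domain and codomain have $k$-dimensions $\mu(J+\m^n/\m^n)$ and $\mu(\bar J+\bar\m^n/\bar\m^n)$ respectively.  Hence $\mu(J+\m^n/\m^n)=\mu(\bar J+\bar\m^n/\bar\m^n)+\dim_k\ker\phi$, and the conjecture follows if the parameters can be chosen so that $\ker\phi=0$.  A nonzero element of $\ker\phi$ is represented by some $y\in J+\m^n$ with $y\in(x_{g+1},\dots,x_d)R+\m J+\m^n$ but $y\notin\m J+\m^n$---equivalently, some minimal generator of $J+\m^n/\m^n$ lies in the $R$-span of the chosen parameters modulo $\m J+\m^n$.

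The third step is to derive a contradiction with the hypothesis of the conjecture when $\ker\phi\neq 0$ for every generic parameter choice.  Exploiting the genericity, an elimination/swap argument among parameters and the minimal generators of $J+\m^n/\m^n$ contributing to $\ker\phi$ should produce elements $z_1,\dots,z_{g-2}\in\m$ with $J\subseteq(z_1,\dots,z_{g-2})R+\m^n$, contradicting the hypothesis.

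The main obstacle is this last step.  A naive parameter count gives only the ideal $(x_{g+1},\dots,x_d)R$ of height $d-g$, which is $\leq g-2$ only when $d\leq 2g-2$; for larger $d$ one must trade parameters against minimal generators of $J+\m^n/\m^n$ and control the height of the resulting ideal.  The combinatorial fact that $\binom{g+n-2}{g-1}$ matches the Hilbert function of a $g$-dimensional regular local ring, rather than of $R$, suggests that the right tool involves the initial form ideal $J^{\ast}\subseteq\mathrm{gr}_\m R$ in degrees $\leq n-1$, together with Lemma~\ref{socle} and Proposition~\ref{1.4}; a careful induction on $d-g$ with a strengthened hypothesis tracking $\dim_k\ker\phi$ should then close the gap.
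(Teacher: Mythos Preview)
The statement you are attempting to prove is labeled a \emph{conjecture} in the paper, and the paper does \emph{not} prove it.  The author explicitly introduces Conjecture~\ref{C1} as a modification of the earlier (false) Conjecture~\ref{E1} ``which we hope are true,'' and the only case the paper settles is $g=\dim R$ (Proposition~\ref{g=d}), where the extra hypothesis on height $g-2$ ideals is not even needed.  There is therefore no ``paper's own proof'' to compare your attempt against.

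Your proposal is not a proof either, and you yourself say so: your third step is a wish, not an argument.  You correctly observe that the reduction to $\bar R=R/(x_{g+1},\dots,x_d)$ gives the bound on $\mu(\bar J+\bar\m^n/\bar\m^n)$, and that the discrepancy is measured by $\ker\phi$.  But the assertion that persistent nonvanishing of $\ker\phi$ over all generic parameter choices forces $J\subseteq(z_1,\dots,z_{g-2})R+\m^n$ is precisely the content of the conjecture, not a known fact.  Your ``naive parameter count'' already shows why this is subtle: a nonzero kernel only tells you that some minimal generator of $J$ modulo $\m^n$ lies in $(x_{g+1},\dots,x_d)R+\m J+\m^n$, an ideal of height $d-g$, and trading parameters for generators to cut this down to height $g-2$ is exactly the kind of delicate argument the paper carries out (with considerable effort and only for $g=3$ and $n\leq 5$) in Section~3 for the related Conjecture~\ref{new}.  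The paper's Example in Section~2 shows that without the height-$(g-2)$ hypothesis the bound fails badly, so that hypothesis must enter the argument in an essential way; your outline does not indicate how.

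In short: your reduction to Proposition~\ref{g=d} is the natural first move, and the paper implicitly uses the same idea (see the proof of Theorem~\ref{main}, which passes to $R/(x_g,\dots,x_d)$).  But the heart of the matter is controlling what is lost in that reduction, and neither you nor the paper has done this in general.
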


\begin{conj}\label{new}
Let $(R,\m)$ be a regular local ring and suppose $J$ is a perfect ideal of grade $g$.
 Further suppose there does not exist a height $g-2$ ideal $I$ such that $J\subseteq I+\m^n$.
If $\mu(J+\m^n/\m^n)\geq \begin{pmatrix} g+n-3\\ g-2 \end{pmatrix}$, then $\mu(J)\leq \begin{pmatrix} g+n-2\\ g-1 \end{pmatrix}$.
\end{conj}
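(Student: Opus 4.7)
The strategy is to reduce to Proposition~\ref{Lemma 487} applied with $\ell=n$, which outputs exactly the desired bound $\mu(J)\leq \binom{g+n-2}{g-1}$. Using the infinite residue field and prime avoidance, pick $x_{g+1},\dots,x_d\in \m$ with linearly independent images in $\m/\m^2$ forming a system of parameters on $R/J$, and extend to a regular system of parameters $x_1,\dots,x_d$ of $R$. The central task, call it $(\ast)$, is to arrange $x_g$ (with possible revision of $x_{g+1},\dots,x_d$ within their parameter property) so that, in the regular local ring $\overline R:=R/(x_g,\dots,x_d)R$ of dimension $g-1$, with maximal ideal $\overline\m$ and image $\overline J$ of $J$, one has $\mu\bigl((\overline J+\overline\m^n)/\overline\m^n\bigr) \geq \binom{g+n-3}{g-2}$. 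Note that $\overline J$ is automatically $\overline\m$-primary, since $J+(x_{g+1},\dots,x_d)R$ is already $\m$-primary.

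Granting $(\ast)$, the proof concludes quickly. Applying Lemma~\ref{socle} inside $\overline R$ gives
$$
\lambda\bigl((\overline J\cap \overline\m^{n-1}+\overline\m^n)/\overline\m^n\bigr)\;\geq\;\mu\bigl((\overline J+\overline\m^n)/\overline\m^n\bigr)\;\geq\;\binom{g+n-3}{g-2}\;=\;\lambda(\overline\m^{n-1}/\overline\m^n),
$$
the last equality coming from $\dim \overline R=g-1$. Since $\overline J\cap \overline\m^{n-1}+\overline\m^n$ sits inside $\overline\m^{n-1}$, equality of lengths forces $\overline\m^{n-1} = \overline J\cap \overline\m^{n-1}+\overline\m^n \subseteq \overline J+\overline\m^n$. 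Multiplying by $\overline\m$ and lifting back to $R$ yields
$$
(x_1,\dots,x_{g-1})^n R\;\subseteq\;\m^n\;\subseteq\;\m J+\m^{n+1}+(x_g,\dots,x_d)R,
$$
which is precisely the hypothesis of Proposition~\ref{Lemma 487} with $\ell=n$; the proposition then delivers the desired bound $\mu(J)\leq \binom{g+n-2}{g-1}$. As a sanity check, the $g=2$ case of this reduction collapses to the usual derivation of Hilbert--Burch from the DVR $\overline R$.

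The whole content of the argument is concentrated in the parameter choice $(\ast)$, and this is the step I expect to resist. The natural surjection $(J+\m^n)/(\m J+\m^n)\twoheadrightarrow (\overline J+\overline\m^n)/(\overline\m\overline J+\overline\m^n)$ can drop dimension, its kernel measuring exactly those generators of $J$ modulo $\m^n$ that get absorbed into $(x_g,\dots,x_d)R+\m J+\m^n$. The hypothesis that no height $g-2$ ideal $I$ satisfies $J\subseteq I+\m^n$ is tailored to exclude the Section~2 example, in which every generator of $J$ already lies in some such $I+\m^n$ and is therefore uniformly swallowed by any choice of parameters. With this non-containment in force, the plan is to show---by a genericity argument over the infinite residue field, examining the image of $J$ in the appropriate graded component of $\operatorname{gr}_\m R$ and arguing that it is not trapped in the subspace cut out by any height $g-2$ initial ideal---that $x_g,\dots,x_d$ may be chosen so that at most $\mu(J+\m^n/\m^n)-\binom{g+n-3}{g-2}$ generators are lost, leaving $(\ast)$ satisfied. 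Making this genericity precise, and confirming that no further obstruction intervenes in the boundary regime where $\mu(J+\m^n/\m^n)$ equals $\binom{g+n-3}{g-2}$ exactly, is the technical heart of a putative proof.
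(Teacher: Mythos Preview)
The statement you are addressing is Conjecture~\ref{new}; the paper does \emph{not} prove it. Immediately after stating it the author writes ``Next we develop a theorem which is weaker than Conjecture~\ref{new}, but we can actually prove at this time,'' and Theorem~\ref{main} is that weaker substitute. So there is no proof in the paper to compare against, and your proposal should be read as a strategy for an open problem rather than as an alternative to an existing argument.

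Your reduction is correct as far as it goes, and it is exactly the template the paper uses in Section~3 for the $g=3$ cases. Granting $(\ast)$, the chain Lemma~\ref{socle} $\Rightarrow$ $\overline{\m}^{\,n-1}\subseteq\overline J+\overline{\m}^{\,n}$ $\Rightarrow$ hypothesis of Proposition~\ref{Lemma 487} with $\ell=n$ $\Rightarrow$ $\mu(J)\leq\binom{g+n-2}{g-1}$ is sound; this is precisely how the paper handles $g=3$, $n=3,4$ (where $\binom{g+n-3}{g-2}=n$). You have correctly isolated $(\ast)$ as the entire content of the problem.

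The genuine gap is that $(\ast)$ is not merely a technicality awaiting a routine genericity argument: the paper's own Section~3 is evidence that it resists. For $g=3$, $n=5$, the author devotes several pages to exactly the question of whether one can choose parameters so that $\mu((\overline J+\overline\m^{\,5})/\overline\m^{\,5})\geq 5$, and in several subcases (Case~(vi) when $d=3$; the $q=1$ and $q=3$ subcases when $d=4$) this cannot be arranged, and the argument falls back on ad hoc standard-basis counting that only yields $\mu(J)\leq 16$ rather than the conjectured $15$. The remark preceding that proposition says explicitly that the bound is ``one higher than desired'' and that ``proving it will require techniques for exploiting perfectness that are not employed in this article.'' So your heuristic---that the non-containment hypothesis on $J$ should let a generic choice of $x_g,\dots,x_d$ preserve enough generators---is the right intuition, but the paper already shows it fails in concrete situations, and your sketch of a graded prime-avoidance argument does not address those failures. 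In short: you have reproduced the paper's framework and correctly located the obstruction, but the obstruction is real and currently unresolved.
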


Next we develop a theorem which is weaker than Conjecture~\ref{new}, but we can actually prove at this time.
While this bound is generally larger than that proposed above, it actually coincides with the bound given by Hilbert-Burch when $g=2$.

\begin{defn}
Let $(R,\m,k)$ be a complete regular local ring.
Fix a regular system of parameters $x_1,\dots,x_d$ for $R$.
There is a natural additive map $\Phi_n:\m^{n-1}\to k[x_1,\dots,x_d]$ whose image consists of all homogeneous elements of degree $n-1$.
For $f\in \m^{n-1}$, let $f=\sum c(t_\alpha,f)t_\alpha$ be a Hironaka representation.
Set $\Phi_n(f)=\sum_{\deg t_\alpha=n-1}\overline{c(t_\alpha,f)}t_\alpha$ where $\overline{c(t_\alpha,f)}\in k$.
\end{defn}

Note this function is well defined in the mixed characteristic case.
Also, while $\Phi_n$ depends on the choice of parameters, changing parameters merely adjusts the function by a graded isomorphism of $k[x_1,\dots,x_d]$.

\begin{lemma}\label{combinatorics}
Let $(R,\m,k)$ be a complete regular local ring of dimension $d>0$ and let $I$ be an ideal of $R$.
Suppose $\Phi_n(I\cap \m^{n-1})$ generates an ideal of height $d$ in $k[x_1,\dots,x_d]$.
Then $\m^{(n-2)d+1}\subset I$.
Further, if $d>1$ and $n>2$, we have $\m^{(n-2)d+1}\subset \m I$.
\end{lemma}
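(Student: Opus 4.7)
The plan is to locate a regular sequence of degree-$(n-1)$ forms inside $\Phi_n(I\cap\m^{n-1})$, apply the Hilbert series of a graded complete intersection to bound the socle degree, and then lift the resulting containment back to $R$ by Newton iteration in the $\m$-adic topology.

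Let $V=\Phi_n(I\cap\m^{n-1})\subseteq k[x_1,\dots,x_d]_{n-1}$. By hypothesis the ideal $V\cdot k[x_1,\dots,x_d]$ has height $d$, and since $k$ is infinite one can apply prime avoidance within the vector subspace $V$ itself (a vector space over an infinite field is not a finite union of proper subspaces) to select successively $g_1,\dots,g_d\in V$ that form a regular sequence in $k[x_1,\dots,x_d]$. Additivity of $\Phi_n$ together with the ability to lift elements of $k$ to the coefficient ring $A$ then lets me choose $f_i\in I\cap\m^{n-1}$ with $\Phi_n(f_i)=g_i$.

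Since the $g_i$ are homogeneous of degree $n-1$ and form a regular sequence of length $d$ in $k[x_1,\dots,x_d]$, the quotient is a graded Artinian complete intersection with Hilbert series $\prod_{i=1}^d(1+t+\cdots+t^{n-2})$, a polynomial of degree $d(n-2)$. Hence $(x_1,\dots,x_d)^{(n-2)d+1}\subseteq(g_1,\dots,g_d)$. Setting $N=(n-2)d+1$, I claim $\m^N\subseteq(f_1,\dots,f_d)R$: identifying $\m^N/\m^{N+1}$ with $k[x_1,\dots,x_d]_N$ via the associated graded isomorphism, any $r\in\m^N$ has initial form $\sum h_ig_i$ with each $h_i$ homogeneous of degree $N-(n-1)=(n-2)(d-1)$. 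Lifting $h_i$ to $\widetilde h_i\in\m^{(n-2)(d-1)}$ with initial form $h_i$, multiplicativity of initial forms in the regular ring $R$ gives $r-\sum\widetilde h_i f_i\in\m^{N+1}$. Iterating on the remainder produces, at stage $k$, corrections $\widetilde h_i^{(k)}\in\m^{(n-2)(d-1)+k-1}$; by $\m$-adic completeness the series $H_i=\sum_k\widetilde h_i^{(k)}$ converges in $R$, and $r=\sum H_if_i\in(f_1,\dots,f_d)R\subseteq I$.

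For the moreover statement, when $d>1$ and $n>2$ the exponent $(n-2)(d-1)$ is already at least $1$ at the initial stage of the iteration, so every correction $\widetilde h_i^{(k)}$ lies in $\m$; hence $H_i\in\m$ and $r\in\m(f_1,\dots,f_d)R\subseteq\m I$. The step I expect to require most care is the mixed-characteristic bookkeeping in the lifting: Hironaka representations are not unique, but $\Phi_n$ is well defined (as noted after the definition), and multiplicativity of initial forms is carried by the associated graded isomorphism $\operatorname{gr}_\m R\cong k[x_1,\dots,x_d]$, which holds because $R$ is regular. Once those facts are in hand, the Newton iteration proceeds identically to the equal-characteristic case.
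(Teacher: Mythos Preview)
Your argument is correct and follows the same overall plan as the paper: pass to a $d$-generated subideal whose initial forms $g_1,\dots,g_d$ are a regular sequence of degree-$(n-1)$ forms, establish $(x_1,\dots,x_d)^{(n-2)d+1}\subset(g_1,\dots,g_d)$ in the graded ring, and lift this containment back to $R$. The execution differs in two places. For the graded containment you invoke the Hilbert series of a graded complete intersection; the paper instead observes that the relevant dimension count is independent of the particular regular sequence and simply verifies it in the monomial case $g_i=x_i^{n-1}$, where $\m^{(n-2)d+1}\subset(x_1^{n-1},\dots,x_d^{n-1})$ is obvious. For the lift you run an explicit Newton iteration using completeness, whereas the paper does it in one stroke with Nakayama: once $\m^N\subset I+\m^{N+1}$, the image of $\m^N$ in $R/I$ equals $\m$ times itself and hence vanishes. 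The paper's route is shorter, but yours has the pleasant side effect that the $\m$-adic order of each coefficient $H_i$ is visible throughout, so the ``moreover'' drops out immediately; the paper instead deduces it afterward from the observation that, after the reduction, $\mu(I)=\mu(I+\m^n/\m^n)=d$, which forces $I\cap\m^n=\m I$ and hence $\m^{(n-2)d+1}\subset\m I$ whenever $(n-2)d+1\geq n$.
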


\begin{proof}
It suffices to prove the result with $I$ replaced by $I\cap\m^{n-1}$.
Then certainly there is a $d$-generated subideal $I'$ such that $\Phi_n(I')$ generates an ideal of height $d$ in $k[x_1,\dots,x_d]$.
We can then harmlessly replace $I$ by $I'$ and so assume $I=(y_1,\dots, y_d)R$ and that $z_1=\Phi_n(y_1),\dots,z_d=\Phi_n(y_d)$ are homogeneous elements of $k[x_1,\dots,x_d]$ of degree $n-1$ which constitute a regular sequence.
We let $M_i$ denote the homogeneous summand of degree $i$ in $k[x_1,\dots,x_d]$.
This is a $k$-vector space, and by simple combinatorics, we see that $\dim M_i= \begin{pmatrix} i+d-1\\ d-1 \end{pmatrix}$.
In our setup, $z_1,\dots,z_d\in M_{n-1}$.

To see that $\m^{(n-2)d+1}\subset I$, by Nakayama's Lemma it suffices to show that $\m^{(n-2)d+1}\subset I+\m^{(n-2)d+2}$.
Certainly $I$ contains all elements of the form $y_1a_1+\dots+y_da_d$ with each $a_i\in \m^{(n-2)(d-1)}$.
Letting $b_i=\Phi_{(n-2)(d-1)+1}(a_i)$, it is easy to see that $\Phi_{(n-2)d+2}(y_1a_1+\dots+y_da_d)=z_1b_1+\dots+z_db_d$, an element of  $M_{(n-2)d+1}$.
It remains to show that these elements constitute all of $M_{(n-2)d+1}$, which can be accomplished by comparing the dimensions of the entire space and the subspace.

As $(n-2)d+1+ (d-1)=(n-1)d$, we have already seen that $\dim M_{(n-2)d+1}= \begin{pmatrix} (n-1)d\\ d-1 \end{pmatrix}$.
For the subspace, we let $V_{ih}$ be the vector space $(z_1,\dots,z_i)M_h$.
(The space is zero dimensional for $h<0$.)
Certainly $\dim V_{1h}=\dim M_h$.
Further $\dim V_{i+1,h}-\dim V_{ih}=\dim M_h-\dim (V_{ih}\cap z_{i+1}M_h)$.
Now $z_{i+1}b_{i+1}\in V_{ih}\iff b_{i+1}\in (z_1,\dots,z_i)M_{h-n+1}$.
Thus $\dim V_{i+1,h}=\dim V_{ih}+\dim M_h-\dim M_{i,h-n+1}$.
The proof is complete if we can show that $\dim V_{d,(n-2)(d-1)}=\dim M_{(n-2)d+1}$.

This can be done by a messy calculation, but there is an easier way to see this.
Notice that the numbers in our calculation are independent of the elements $z_i$.
Hence we can check our equality in the case where $z_1=x_1^{n-1},\dots,z_d=x_d^{n-1}$.
However, it is readily apparent that $\m^{(n-2)d+1}=(z_1,\dots,z_d)M_{(n-2)(d-1)}$ in the special case and so the first inclusion holds in general.

To see the moreover statement, note that, after our reductions, $\mu(I)=\mu(I+\m^n/\m^n)=d$ and so every element of $\m^{(n-2)d+1}$ must be in $\m I$ unless $(n-2)d+1=n-1$.
This only happens when $d=1$ or $n=2$.
\end{proof}

\begin{ques}
If $R$ is a regular local ring of dimension $d$ and $I_1,I_2$ are minimal reductions of $\m^n$, we see from this lemma that both ideals contain $\m^{(n-1)d+1}$.
It is also clear using the same reasoning that neither will contain $\m^{(n-1)d}$.
Thus $I_i$ does not contain $\overline {I_i^{d-1}}$ but does contain $\overline{I_i^{d-1+\frac 1d}}$.
This suggests the following question.
If $I_1,I_2$ are minimal reductions of the same integral closure and $a$ is a rational number, is it true that $I_1$ contains $\overline{I_1^a}$ if and only if $I_2$ contains  $\overline{I_2^a}$.
Of course, $\overline{I_1^a}=\overline{I_2^a}$.
\end{ques}

\begin{thm}\label{main}
 Let $(R,\m,k)$ be a complete regular local ring of dimension $d$ with $|k|=\infty$ and suppose $J$ is a perfect ideal of grade $g\geq 2$.
 Let $x_1,\dots,x_d$ be a regular system of parameters for $R$.
 Further suppose $n>2$ and $\Phi_n(J\cap \m^{n-1})$ generates an ideal of $k[x_1,\dots,x_d]$ of height at least $g-1$.
Then $\mu(J)\leq \begin{pmatrix} (g-1)(n-1)+\alpha \\ g-1 \end{pmatrix}$ where $\alpha=1$ if $g=2$ and zero otherwise.
\end{thm}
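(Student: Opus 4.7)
My plan is to reduce the theorem to Lemma~\ref{combinatorics} applied in the $(g-1)$-dimensional quotient $\bar R=R/(x_g,\dots,x_d)R$, then to invoke Proposition~\ref{Lemma 487} with a carefully chosen exponent $\ell$. The first step is to prepare the parameters. Since $\Phi_n$ changes under change of parameters only by a graded isomorphism, the height hypothesis is preserved. I begin by picking $y_1,\dots,y_{g-1}\in J\cap\m^{n-1}$ with $z_i=\Phi_n(y_i)$ generating a height $g-1$ ideal in $k[x_1,\dots,x_d]$, then replace $x_g,\dots,x_d$ by generic linear combinations $L_g,\dots,L_d$ chosen simultaneously so that (1) $x_1,\dots,x_{g-1},L_g,\dots,L_d$ is a regular system of parameters for $R$, (2) $L_{g+1},\dots,L_d$ descends to a system of parameters for $R/J$ (possible since $R/J$ is Cohen-Macaulay of dimension $d-g$), and (3) $(z_1,\dots,z_{g-1},L_g,\dots,L_d)$ is $\m$-primary in $k[x_1,\dots,x_d]$. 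Condition (3) is a Noether-normalization-type claim: $V(z_1,\dots,z_{g-1})\subseteq\mathbb A^d$ has dimension $d-g+1$ and can be cut to the origin by $d-g+1$ generic hyperplanes. Over the infinite residue field all three conditions hold simultaneously; I relabel $L_j$ as $x_j$.

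Next I apply Lemma~\ref{combinatorics} to $\bar J\subset\bar R$. Using that a Hironaka representation of $y_i\in R$ induces, by deleting terms involving $x_g,\dots,x_d$, a Hironaka representation of $\bar y_i\in\bar R$, the image $\Phi_n^{\bar R}(\bar y_i)$ is the specialization of $z_i$ at $x_g=\dots=x_d=0$. By (3) these specializations generate an $\m$-primary (height $g-1$) ideal in $k[x_1,\dots,x_{g-1}]$. Lemma~\ref{combinatorics} then yields $\bar\m^{(n-2)(g-1)+1}\subset\bar J$, and for $g>2$ the moreover clause (applicable since $\dim\bar R=g-1>1$ and $n>2$) gives the sharper $\bar\m^{(n-2)(g-1)+1}\subset\bar\m\bar J$. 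For $g=2$ the moreover clause is unavailable, but multiplying $(x_1)^{n-1}\subset J+(x_2,\dots,x_d)R$ by $x_1$ yields $(x_1)^n\subset\m J+(x_2,\dots,x_d)R$. In both cases lifting to $R$ gives $(x_1,\dots,x_{g-1})^{\ell}\subset\m J+(x_g,\dots,x_d)R$, with $\ell=(n-2)(g-1)+1$ when $g>2$ and $\ell=n$ when $g=2$.

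Proposition~\ref{Lemma 487} then applies with this $\ell$, yielding $\mu(J)\leq\binom{g+\ell-2}{g-1}$. A direct computation gives $g+\ell-2=(g-1)(n-1)+\alpha$ with $\alpha$ as in the statement, completing the argument. I expect the main obstacle to be the simultaneous genericity argument of step one: the three conditions are each open-dense over an infinite residue field, but executing condition (3) cleanly requires the Noether-normalization statement above, and the joint argument must be written carefully so that no choice made for one condition spoils another. A secondary technical point is verifying the compatibility of Hironaka representations under $R\twoheadrightarrow\bar R$, used to identify $\Phi_n^{\bar R}(\bar y_i)$ with the specialization of $\Phi_n(y_i)$; this requires a direct check using the construction of the representations, especially in mixed characteristic where Hironaka remainders are non-unique.
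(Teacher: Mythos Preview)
Your proposal is correct and follows essentially the same route as the paper: adjust the last $d-g+1$ parameters so that $\Phi_n(J\cap\m^{n-1})$ together with $x_g,\dots,x_d$ is $\m$-primary, pass to $\bar R=R/(x_g,\dots,x_d)$, invoke Lemma~\ref{combinatorics} there (with the moreover clause when $g>2$, and a one-step shift when $g=2$), and finish with Proposition~\ref{Lemma 487} at $\ell=(n-2)(g-1)+1+\alpha$. You are in fact slightly more explicit than the paper in two places: you record condition~(2), that $x_{g+1},\dots,x_d$ be a system of parameters for $R/J$, which Proposition~\ref{Lemma 487} requires but the paper leaves implicit in its prime-avoidance sketch; and you flag the passage of Hironaka representations through the quotient, which the paper takes for granted. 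The only cosmetic difference is that the paper separates the genericity into two steps (first fix the linear parts $\bar y_g,\dots,\bar y_d$ in $k[x_1,\dots,x_d]$, then adjust lifts by elements of $\m^2$ to obtain a regular system), whereas you take all $L_j$ as generic linear combinations at once; both work over an infinite residue field.
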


\begin{proof}
First we ``improve" our system of parameters.
It is a relatively simple exercise in prime avoidance to choose a regular system of parameters $y_1,\dots,y_d$ such that, in $S=k[x_1,\dots,x_d]$, $\Phi_n(J\cap{\m}^{n-1})S+(\Phi_2(y_g),\dots,\Phi_2(y_d))S$ has height $d$.
To do this, as $\Phi_n(J\cap \m^{n-1})$ has height at least $g-1$, we can easily choose $\bar{y}_g,\dots, \bar{y}_d\in S$ so that $\Phi_n(J\cap{\m}^{n-1})S+(\bar{y}_g,\dots,\bar{y}_d)S$ has height $d$.
This is simple prime avoidance.
Then we lift each $\bar{y}_i$ to an element $y_i'\in R$.
For $i<g$, let $y_i=x_i$.
For $i\geq g$, we let $y_i=y_i'+a_i$ for some well chosen $a_i\in \m^2$; $y_i$ will also be a lifting of $\bar{y}_i$.
Since $\m^2$ will not be contained in any primes we are trying to avoid, we can easily choose the $a_i'$s so that $y_1,\dots,y_d$ is a regular system of parameters.
As changing the regular parameters $x_1,\dots,x_d$ to $y_1,\dots,y_d$ just gives a graded isomorphism of $k[x_1,\dots, x_d]$, we can assume $x_j=y_j$ for all $j$ and our hypothesis remains valid.

Now let $\tilde{R}=R/(x_g,\dots,x_d)$ and $\tilde{J}=J+(x_g,\dots,x_d)R$.
Then $\Phi_n(\tilde{J}\cap\tilde{\m}^{n-1})$ generates an ideal of height $g-1$ in $k[x_1,\dots,x_{g-1}]$.
Next we may apply Lemma~\ref{combinatorics} to $\tilde{R}$  and see that
$(x_1,\dots,x_{g-1})^{(n-2)(g-1)+1}R \subset J+(x_g,\dots,x_d)R$.
If $g-1>1$, we in fact get $(x_1,\dots,x_{g-1})^{(n-2)(g-1)+1}R\subset \m J+(x_g,\dots,x_d)R$, but when $g=2$, we must add $1$ to the exponent to get the ideal inside $ \m J+(x_g,\dots,x_d)R$.
Finally we apply Proposition~\ref{Lemma 487} to $R$ with $\ell=(n-2)(g-1)+1+\alpha$ to obtain the desired conclusion.
\end{proof}

\begin{remark}
In the statement of the conjectures, it is assumed that there does not exist an ideal $I$ of height $g-2$ such that $J\subseteq I+\m^n$.
In the statement of the main theorem, it is assumed that $\Phi_n(J)$ generates an ideal of height at least $g-1$.
It is not hard to see that these assumptions coincide for $g=3$ and of course for $g=2$.
However the equivalence is not readily apparent for $g>3$ though it seems reasonable.
\end{remark}

\section{Exploring Conjecture~\ref{new} when $g=3$}

In this section, we seek to obtain a few partial results toward proving Conjecture~\ref{new} when $g=3$.
Accordingly, we will organize a standard set of assumptions for a fixed value of $n$.

\begin{notation}\label {notation}
Let $(R,\m,k)$ be a complete local ring of dimension $d\geq 3$ with an infinite residue field.
Let $J\subseteq \m^{n-1}$ be a perfect ideal of $R$ of height $g=3$.
Assume $\mu(J+\m^n/\m^n)\geq n$ and that $J\nsubseteq yR+\m^n$ for any element $y\in\m$. 
\end{notation}

Next we introduce one of our basic techniques.
We will want our regular system of parameters to work well with our function $\Phi_n$ and so we make the following definition.

\begin{defn}
 We call a regular system of parameters $x_1,\dots,x_d$ $n$-compliant provided it satisfies the following two conditions:
 \begin{enumerate}
  \item Any subset of $d-3$ elements is a system of parameters for $R/J$.
Alternately, as the depth of $R/J$ is $d-3$, any such subset is a regular sequence on $R/J$.
  \item The height of $\Phi_n(J)k[x_1,\dots,x_d]+(z_1,\dots,z_\ell)k[x_1,\dots, x_d]$ is at least $\ell+2$ for every subset $\{z_1,\dots, z_\ell\}\subset \{x_1,\dots,x_d\}$ with $\ell\leq d-2$.
\end{enumerate}
\end{defn}

Finding an $n$-compliant (regular) system of parameters is a simple exercise in prime avoidance.
Further, if we already have an $n$-compliant system, a new system obtained by replacing $x_i$ by $x_i'=x_i+\alpha x_j$ will be $n$-compliant for all but finitely many choices of $\alpha$ modulo $\m$.
To see this, note that we simply need $x_i'$ to avoid a finite set of primes which $x_i$ already avoids.
Consider any such prime $P$.
If $x_i+\alpha_1 x_j, x_i+\alpha_2 x_j\in P$ with $\alpha_1-\alpha_2\notin \m$, then $(\alpha_1-\alpha_2)x_j\in P$ and so $x_j\in P$.
However, in this case, $x_i'\notin P$ for all $\alpha$.
Throughout our proofs, we will want to ``improve" our system of parameters by replacing $x_i$ by $x_i'$.
We will need $\alpha$ to not be a solution of any of a finite set of polynomial equations over $k$.
To do this, $\alpha$ must avoid a finite set.
Insisting that the new set of parameters is $n$-compliant merely increases the finite set of invalid choices and poses no difficulty as $k$ is infinite.

\begin{lemma}\label{dimension reduction}
Fix $n\geq 3$ and assume the setting of Notation~\ref{notation}.
Assume one of the following holds.
\begin{enumerate}
  \item $d>n$
  \item $d=n$ and $\mu(J+\m^n/\m^n)> n$
  \item $d= n$ and $\{r^{n-1}\mid r\in\m\}\nsubseteq J+\m^n$
  \end{enumerate}
Then there exists $z\in\m-\m^2$ such that $\mu ((J+\m^n+zR)/(\m^n+zR))\geq n$ and $J\nsubseteq \m^n+(y,z)R$ for any $y\in\m$.
Moreover, unless $d=3$, we may choose $z$ to be a parameter in $R/J$.
\end{lemma}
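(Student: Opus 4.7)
The plan is to begin with an $n$-compliant regular system of parameters $x_1,\dots,x_d$ (its existence is the prime-avoidance remark preceding this lemma) and set $z:=x_d$. Since $|k|=\infty$, we retain the freedom to replace $x_d$ by $x_d+\sum_{j<d}\alpha_j x_j$ for generic $\alpha_j\in k$, avoiding any finite list of bad polynomial conditions while preserving $n$-compliance.

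\textbf{The two easy conclusions.} If $d\geq 4$, condition~(1) of $n$-compliance says every $(d-3)$-subset of $\{x_1,\dots,x_d\}$ is a system of parameters for $R/J$; applying it to a subset containing $z$ will show $z$ is a parameter in $R/J$. For $J\nsubseteq\m^n+(y,z)R$ for any $y\in\m$, I would argue by contradiction: such an inclusion forces $\Phi_n(J)\subseteq(\bar{y},z)S_{n-2}$ in $S:=k[x_1,\dots,x_d]$, whence $\Phi_n(J)S+(z)S\subseteq(\bar{y},z)S$ has height at most $2$, contradicting $\Ht(\Phi_n(J)S+(z)S)\geq 3$ from condition~(2) of $n$-compliance with $\ell=1$.

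\textbf{Main step: $\mu((J+\m^n+zR)/(\m^n+zR))\geq n$.} This module is killed by $\m$ (since $\m J\subseteq\m^n$), so its $\mu$ equals its $k$-dimension. Writing $V:=\Phi_n(J)\subseteq S_{n-1}$, a direct calculation will identify that $k$-dimension with $\dim_k V-\dim_k(V\cap zS_{n-2})$, the dimension of the image $\pi_z(V)$ in $\bar{S}_{n-1}:=S_{n-1}/zS_{n-2}$. To bound this below by $n$ for suitably generic $z$, I plan to use the incidence variety
$$X:=\{(v,z)\in V\times S_1 : v\in zS_{n-2}\}.$$
For $v\neq 0$, the fiber $\{z:z\mid v\}$ is a finite union of lines in $S_1$ (the cones over the linear factors of $v$), giving $\dim X\leq\max(\dim V+1,d)$. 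If $\dim\pi_z(V)<n$ for every $z$, then every fiber of $X\to S_1$ has dimension $\geq\dim V-n+1$, yielding $\dim X\geq d+\dim V-n+1$. Combining the two bounds forces either $d\leq n$ or $\dim V\leq n-1$; in \emph{case~(1), $d>n$}, both contradict the hypotheses, and the main step is immediate.

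\textbf{Main obstacle: cases~(2) and (3), $d=n$.} Here the dimension estimate above is tight rather than contradictory, and its saturation forces the generic $v\in V$ to have a linear factor; as ``has a linear factor'' is a closed condition on $V$, this forces $V\subseteq R_1:=\bigcup_{[z]\in\mathbb{P}(S_1)}zS_{n-2}\subseteq S_{n-1}$. The plan in that case is to classify linear subspaces of $R_1$ and rule each one out. Any $V\subseteq yS_{n-2}$ contradicts the standing hypothesis. If $V$ lies in a proper polynomial subring $k[x_{i_1},\dots,x_{i_{d-1}}]_{n-1}$, then picking $z$ outside the span of those variables makes $\pi_z$ injective on $V$, giving image dimension $\dim V\geq n$. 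The remaining ``nonstandard'' maximal linear subspaces must then be eliminated: case~(2) is expected to use $\dim V>n$ to exceed their dimension, while case~(3) uses the existence of some $\ell\in S_1$ with $\ell^{n-1}\notin V$ to force $V$ to miss the direction of $\ell$ and reduce to the subring situation. For the paradigmatic $g=n=3$ the classical classification identifies exactly two families of maximal $3$-dimensional linear subspaces of the singular-conic locus (``quadrics through a fixed line'' and ``cones with fixed vertex''), each eliminated by exactly one refined hypothesis; carrying this classification through uniformly in $d=n$ is the technical core of the argument and what I expect to be the main obstacle.
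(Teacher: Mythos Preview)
Your handling of the ``easy conclusions'' and of Case~(1) ($d>n$) is correct, and your incidence-variety dimension count is genuinely cleaner than what the paper does there. Projectivizing your $X$ to $\tilde X\subset\mathbb P(V)\times\mathbb P(S_1)$, the projection to $\mathbb P(V)$ has finite fibers (so $\dim\tilde X\le\dim V-1$), while the projection to $\mathbb P(S_1)$ has all fibers of dimension $\ge\dim V-n$ under your contradiction hypothesis (so $\dim\tilde X\ge(d-1)+(\dim V-n)$). This forces $d\le n$, and equality forces $\tilde X\to\mathbb P(V)$ to be surjective, i.e.\ $V\subseteq R_1$, exactly as you say. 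The paper instead builds $\phi_1,\dots,\phi_n$ one at a time, forcing specific monomials into their supports by explicit linear coordinate changes, and observes they stay independent modulo any would-be $x_{n+1}$; your argument packages the same content more elegantly.

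The genuine gap is in Cases~(2) and~(3), and it is more serious than ``technical core.'' Your proposed trichotomy does not exhaust the linear subspaces of $R_1$, even for $d=n=3$: in characteristic~$2$ the span of $x_1^2,x_2^2,x_3^2$ sits inside $R_1$ (every element is a perfect square), is $3$-dimensional, is \emph{not} of the form $yS_1$, and is \emph{not} contained in any two-variable subring. This is precisely the configuration that survives when all of (1),(2),(3) fail, and your classification has no mechanism to detect it; more generally, whenever $n-1$ is a power of $\operatorname{char}k$ the span of $x_1^{n-1},\dots,x_n^{n-1}$ gives such an exotic subspace of $R_1$. So a classification of linear subspaces of $R_1$ adequate for your argument would have to be characteristic-sensitive, and you have not supplied one.

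The paper bypasses this classification entirely. It introduces the invariant
\[
q=\min\{\,e:\ \exists\ n\text{-compliant parameters and }\phi\in V\text{ with }\phi=x_1^e w,\ x_1\nmid w\,\},
\]
arranges (by coordinate changes) that $\phi_i=x_i^qw_i$ for each $i$, and then exploits the minimality of $q$: for generic $\gamma$ there is some $(x_1+\gamma x_2)^q y\in V$, hence
\[
(x_1+\gamma x_2)^q y=a_1x_1^qw_1+a_2x_2^qw_2.
\]
If $q>1$ is not a $p$-th power this equation forces $w_1\in(x_1)$, contradicting the choice of $q$. If $q=1$ or $q$ is a $p$-th power, a short factorization argument shows either $w_i=w_j$ or $w_i,w_j\in(x_i^q,x_j^q)$ for every pair, and then an intersection argument forces all $w_i$ equal; since $V$ generates a height-$\ge2$ ideal this makes $w_1$ a unit, so $q=n-1$ and $V$ is spanned by $x_1^{n-1},\dots,x_n^{n-1}$ with $n-1$ a $p$-th power. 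That simultaneously violates (2) (since $\dim V=n$) and (3) (since then every $r^{n-1}$ lies in $V$). The $q$-invariant, and the case split on whether $q$ is a $p$-th power, are the missing ideas in your proposal.
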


\begin{remark}
If $n-1$ is a power of the characteristic of $k$, $\{r^{n-1}\mid r\in\m\}\subseteq J+\m^n$ if and only if $J+\m^n$ contains $x_1^{n-1},\dots, x_d^{n-1}$ for some system of parameters $x_1,\dots,x_d$.
On the other hand, if $n-1$ is not a power of the characteristic of $k$, when $d=n$, either condition (2) or condition (3) always holds.
\end{remark}

We now prove the lemma.

\begin{proof}
Let $x_1,\dots, x_d$ be an $n$-compliant system of parameters for $R$.
We let $M_i$ denote the vector space of homogeneous forms of degree $i$ in $k[x_1,\dots,x_d]$.
We let $V$ denote the subspace of $M_{n-1}$ which is spanned by $\Phi_n(J)$ and set $m=\dim V$.
Assume we have a counterexample where $d\geq n$ and no such $z$ exists.
Necessarily there exists $f_1\in J$ such that $\phi_1=\Phi_n(f_1)=x_1h_1$ for some $h_1\in M_{n-2}$.
More precisely, there exists $f_1\in J$ such that $\phi_1=\Phi_n(f_1)=x_1^qw_1$ for some $w_1\in M_{n-q-1}$ such that $w_1\notin x_1M_{n-q-2}$.
We can choose our $n$-compliant system of parameters  and $f_1$ to minimize $q$.
There exists a monomial $x_1^{e_1}x_2^{e_2}\dots x_d^{e_d}\in S(\phi_1)$.
For most changes of parameters $x_i'=x_i+\alpha_ix_1$ for all $i>1$, we can force $x_1^{n-1}\in S(\phi_1)$.
Then we note that $J=f_1R+J_2$ with $x_1^{n-1}\notin S(\phi)$ for all $\phi\in \Phi(J_2)$.
Next there exists $f_2\in J$ such that $\phi_2=\Phi_n(f_2)=x_2h_2$ for some $h_2\in M_{n-2}$.
Clearly $f_2\in J_2$.
There exists a monomial $x_1^{e_1}x_2^{e_2}\dots x_d^{e_d}\in S(\phi_2)$.
For most changes of parameters $x_i'=x_i+\alpha_ix_1$ for all $i>2$, we can force $x_1^{e_1}x_2^{n-1-e_1}\in S(\phi_2)$.
Then $J=(f_1,f_2)R+J_3$ with $x_1^{n-1},x_1^{e_1}x_2^{n-1-e_1}\notin S(\phi)$ for all $\phi\in \Phi(J_3)$.
Continuing, there exists $f_3\in J$ such that $\phi_3=\Phi_n(f_3)=x_3h_3$ for some $h_3\in M_{n-2}$ and as before, we have $f_3\in J_3$.
After adjusting the support of $\phi_3$, $J_4$ will be the subideal of $J_3$ with the obvious restriction on the elements of the support of $\Phi_n(J_4)$.
And so on.
It is clear that $\phi_1,\dots,\phi_n$ are linearly independent and remain linearly independent modulo $x_{n+1}$.
So there cannot be an $x_{n+1}$ and we must have $d=n$.

Next we re-examine the situation before the final step.  
We had $J=(f_1,\dots,f_{n-1})R+J_n$.
As $\phi_1,\dots,\phi_{n-1}$ are linearly independent modulo the span of $\Phi_n(J_n)$ and we are assuming $\mu ((J+\m^n+x_nR)/(\m^n+x_nR))< n$, we must have $\Phi_n(J_n)\subseteq x_nM_{n-2}$.
Now, because we have an $n$-compliant system of parameters, $J\nsubseteq \m^{n} + (z_1,\dots,z_{d-1})R$ for any proper subset of our system of parameters.  
It follows that, for every $i$, $x_i^{n-1}$ is in the support of some $\phi_i$ and trivially we see that it must be that $x_i^{n-1}\in S(\phi_i)$ for $i<d$.
We can also choose $f_d,\dots,f_m$ so that $x_d^{n-1}\in S(\phi_d)$.
Write $J=(f_1,\dots,f_d)R+J_{n+1}$ with $x_i^{n-1}\notin S(\phi)$ for all $i$ and $\phi\in \Phi(J_{n+1})$.
For any $i\leq d$, $\mu ((J+\m^n+x_iR)/(\m^n+x_iR)< n$ forces $\Phi_n(J_{n+1})\subseteq x_iM_{n-2}$.
But then the homogeneous degree $n-1$ form $\phi_{d+1}$, if it exists, must be divisible by $\prod_{i=1}^dx_i$, an impossibility.
It follows that $m=d=n$.

Again, by our assumption that the conclusion of the lemma is false and the minimality assumption on $q$, we have, for all but finitely many values of $\gamma$ that there exists a nonzero element $(x_1+\gamma x_2)^qy\in V$.
We may write $(x_1+\gamma x_2)^qy=\sum_{i=1}^na_i\phi_i$ with each $a_i\in k$.
Simple consideration of the support yields $a_i=0$ whenever $i>2$ and so we have 
$(x_1+\gamma x_2)^qy=a_1\phi_1+a_2\phi_2=a_1x_1^qw_1+a_2x_2^qw_2$.
First consider the case where $q=1$ or $q$ is a power of the characteristic of $k$.
Here the left hand side of the equation simplifies to $x_1^q+\gamma^qx_2^q$.
Regrouping, $x_1^q(y-a_1w_1)=x_2^q(a_2w_2-\gamma^q y)$.
This means that either $y-a_1w_1=a_2w_2-\gamma^q y=0$, in which case we may assume $w_1=w_2$ or there exists a nonzero polynomial $t$ such that $y-a_1w_1=x_2^qt$ and $a_2w_2-\gamma^q y=x_1^qt$.
Eliminating $y$ from these equations gives $a_2w_2-\gamma^q(x_2^qt+a_1w_1)=x_1^qt$ and so 
$(x_1^q+\gamma^q x_2^q)t=a_2w_2-a_1\gamma^q w_1$.
Choosing another $\delta$ so that $(x_1^q+\delta^q x_2^q)$ does not divide $t$, we obtain $(x_1^q+\delta ^qx_2^q)t'=a_2'w_2-a_1'\delta^q w_1$.
As $(x_1^q+\delta^q x_2^q)$ does not divide $(x_1^q+\gamma^q x_2^q)t$, $a_2'w_2-a_1'\delta^q w_1$ and $a_2w_2-a_1\gamma^q w_1$ are not multiples of each other and so
 $a_2a_1'\delta^q-a_1a_2'\gamma^q\neq 0$.
 It follows that $w_1,w_2$ are linear combinations of $(x_1^q+\gamma^q x_2^q)t$ and $(x_1^q+\delta^q x_2^q)t'$ and so $w_1,w_2\in (x_1^q,x_2^q)$.
Thus either $w_1=w_2$ or $w_1,w_2\in (x_1^q,x_2^q)$.
Symmetrically, for any $i>2$, either $w_1=w_i$ or $w_1,w_i\in (x_1^q,x_i^q)$.
In fact, for any $i\neq j$, either $w_i=w_j$ or $w_i,w_j\in (x_i^q,x_j^q)$.
Suppose $w_1\neq w_i$ for all $i>1$.
Then $w_1\in \cap_{i=2}^n(x_1,x_i)\cap M_{n-1-q}\subset (x_1)$.
This is a contradiction.
So $w_1=w_i$ for some $i\geq 2$.
Suppose there also exists $j>1$ such that $w_1\neq w_j$.
Then $w_1=w_i\in (x_i^q,x_j^q)$, which is impossible since $x_i^{n-1}\in S(\phi_1)$.
So we must have $w_i=w_1$ for all $i$.
Then every element of $V$ is a multiple of $w_1$, which can only be true if $w_1$ is a unit and again $q=n-1$.
Then, modulo $\m^n$, $\phi_i\equiv x_i^{n-1}$ and this is not allowed by the second assumption.

Now we assume $q>1$ and $q$ is not a power of the characteristic of $k$.
Here $(x_1+\gamma x_2)^q$ has terms which are not in $(x_1^q,x_2^q)$ and so $(x_1+\gamma x_2)^qy=a_1x_1^qw_1+a_2x_2^qw_2$ forces $y\in (x_1,x_2)$ and so $w_1,w_2\in (x_1,x_2)$.
Symmetrically, $w_1\in (x_1,x_i)$ for every $i$, which forces $w_1\in (x_1)$, violating the definition of $q$.
This completes the proof.

\end{proof}

\begin{remark}\label{remark}
This lemma will allow us to reduce the proof of Conjecture~\ref{new} when $g=3$ and $n$ is fixed to the case $d\leq n$.
Choose our parameters so that $x_d$ is the $z$ given by the lemma.
Let $\bar{R}=R/x_dR$ and let $\bar{J}=J+x_dR/x_dR$.
Then $\mu(\bar{J}+\m^n/\m^n)\geq n$.
Since the height of $(\phi_1,\dots,\phi_m,x_d)k[x_1,\dots, x_d]$ is at least $3$, $(\phi_1,\dots,\phi_m)k[x_1,\dots, x_d]\nsubseteq (x_d,y)k[x_1,\dots, x_d]$ for any $y\in\m$ and so $\bar{J}\nsubseteq \bar{\m}^n+y\bar{R}$.
Since $x_d$ is a parameter in the Cohen-Macaulay ring $R/J$,
it quickly follows that $(J+x_dR)/x_dR$ is a perfect ideal in $R/x_dR$.
Further, as $x_d$ is regular on $R/J$, $x_dR$ cannot contain any generator of $J$ and so $\mu(\bar{J})=\mu(J)$. 
Thus we may reduce the value of $d$.
\end{remark}

\begin{prop}
Let $n=2$ and assume the setting of Notation~\ref{notation}.
Then $\mu(J)\leq 3$.
\end{prop}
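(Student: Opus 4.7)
The plan is to apply Proposition~\ref{Lemma 487} with $g=3$ and $\ell=2$, which would immediately give $\mu(J)\leq\begin{pmatrix}3\\2\end{pmatrix}=3$. To use that proposition I need to produce a regular system of parameters $x_1,\ldots,x_d$ in which $x_4,\ldots,x_d$ is a system of parameters for $R/J$ and
\[
(x_1,x_2)^2R\subset \m J+\m^3+(x_3,\ldots,x_d)R.
\]

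The first step will be to arrange the parameters. Because $\mu(J+\m^2/\m^2)\geq 2$, the image of $J$ in $\m/\m^2$ has dimension at least two, so I can choose $f_1,f_2\in J$ whose images there are linearly independent and take $x_1\equiv f_1$, $x_2\equiv f_2$ modulo $\m^2$ (in fact one may just set $x_i=f_i$). I would then extend $x_1,x_2$ to a regular system of parameters $x_1,\ldots,x_d$ by choosing $x_3,\ldots,x_d$ subject to two simultaneous constraints: their images in $\m/\m^2$ together with $\bar{x}_1,\bar{x}_2$ must form a basis, and the last $d-3$ of them must project to a system of parameters in the Cohen--Macaulay ring $R/J$ of dimension $d-3$. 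Since $k$ is infinite and each constraint is a Zariski-open condition (avoid a hyperplane in $\m/\m^2$, and at each stage avoid finitely many primes of $R/J$), standard prime avoidance yields such a choice.

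With the parameters in place, write $f_i=x_i+r_i$ with $r_i\in\m^2$. For any $i,j\in\{1,2\}$ the identity
\[
x_ix_j=x_if_j-x_ir_j
\]
exhibits $x_ix_j$ as a sum of an element of $\m J$ (since $x_i\in\m$ and $f_j\in J$) and an element of $\m\cdot\m^2=\m^3$. Hence $(x_1,x_2)^2R\subset \m J+\m^3$, which is already stronger than the inclusion required by Proposition~\ref{Lemma 487}. Applying that proposition with $g=3$ and $\ell=2$ yields $\mu(J)\leq \begin{pmatrix}3\\2\end{pmatrix}=3$, as desired.

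The only step demanding any care is the simultaneous selection of the regular system of parameters; the core identity is a two-line calculation. I note in passing that, in the $n=2$ case, the hypothesis ``$J\nsubseteq yR+\m^2$ for every $y\in\m$'' is equivalent to $\mu(J+\m^2/\m^2)\geq 2$, so the two conditions from Notation~\ref{notation} collapse into one and only the latter is used.
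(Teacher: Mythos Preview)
Your argument is correct and follows essentially the same setup as the paper: you choose $x_1,x_2\in J$ as part of a regular system of parameters with $x_4,\ldots,x_d$ a system of parameters for $R/J$, exactly as the paper does. The only difference is cosmetic---you verify $(x_1,x_2)^2\subset\m J+\m^3$ and invoke Proposition~\ref{Lemma 487}, whereas the paper bypasses that proposition and argues directly that a standard basis for $J$ consists of $x_1,x_2$ together with a single element $f_3$ having $LT(f_3)=x_3^m$.
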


\begin{proof}
Here we may choose a regular system of parameters $x_1,x_2,\dots, x_d$ such that $x_1,x_2\in J$ and $x_{g+1},\dots,x_d$ form a system of parameters for $R/J$.
A standard basis for $J$ will be $x_1,x_2,f_3$ where $LT(f_3)=x_3^m$ where $m$ is the smallest $j$ such that there exists $s\in J$ with $LT(s)=x_3^j$.
\end{proof}

\begin{prop}
Let $n=3$ and assume the setting of Notation~\ref{notation}.
Then $\mu(J)\leq 6$.
\end{prop}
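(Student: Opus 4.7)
The plan is to apply Theorem~\ref{main} directly with $(g,n)=(3,3)$: for these values the bound $\binom{(g-1)(n-1)+\alpha}{g-1}=\binom{4}{2}=6$ (with $\alpha=0$ since $g>2$) is precisely the claim. Since $J\subseteq\m^{n-1}=\m^2$ by Notation~\ref{notation}, we have $J\cap\m^{n-1}=J$, so the only hypothesis of Theorem~\ref{main} that requires checking is that $\Phi_3(J)$ generates an ideal of $S=k[x_1,\dots,x_d]$ of height at least $g-1=2$.

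I would verify the height condition by contraposition. Suppose $\Phi_3(J)S$ has height at most one. Since $S$ is a UFD, a minimal prime of height one is principal, say $(\bar y)S$ with $\bar y\in S$ irreducible homogeneous of degree in $\{1,2\}$ (the degenerate case $\Phi_3(J)=0$ simply means $J\subseteq\m^3$, which already contradicts Notation~\ref{notation} once we pick any $y\in\m$). Lift $\bar y$ to $y\in\m^{\deg\bar y}\subseteq\m$. For each $f\in J$ write $\Phi_3(f)=\bar y\,h$ with $h\in S$ homogeneous of degree $2-\deg\bar y$, lift $h$ to $h'\in R$, and compute (using additivity of $\Phi_3$) that $\Phi_3(f-yh')=0$. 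Thus $f-yh'\in\m^3$, so $f\in yR+\m^3$, and therefore $J\subseteq yR+\m^3$, contradicting the assumption in Notation~\ref{notation}.

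With the height condition in hand, Theorem~\ref{main} applies (the hypotheses $n=3>2$ and $g=3\geq 2$ are satisfied) and gives
\[
\mu(J)\leq\binom{(g-1)(n-1)+\alpha}{g-1}=\binom{4}{2}=6,
\]
which is the claim.

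The only real subtlety is the bridge from Notation~\ref{notation}'s principal-$yR$ hypothesis to the height hypothesis of Theorem~\ref{main}; the remark after Theorem~\ref{main} labels this equivalence as routine for $g=3$, and the argument above makes it explicit. What underlies the equivalence is that, in the regular local ring $R$, any height-one ideal sits inside a principal prime, so the full Conjecture~\ref{new} hypothesis (forbidding a height-$(g-2)$ ideal $I$ with $J\subseteq I+\m^n$) collapses to the principal case when $g=3$. No dimension reduction via Lemma~\ref{dimension reduction} is needed here, since Theorem~\ref{main} holds in arbitrary dimension $d\geq g$.
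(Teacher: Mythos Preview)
Your proof is correct and is a genuinely cleaner route than the paper's. The paper first reduces to $d=3$ via Lemma~\ref{dimension reduction} and Remark~\ref{remark}, then splits into two cases according to whether $\{r^{2}:r\in\m\}\subseteq J+\m^3$; one case is handled by a further application of Lemma~\ref{dimension reduction} followed by Proposition~\ref{Lemma 487}, the other by a direct standard-basis count. You bypass all of this by observing that for $(g,n)=(3,3)$ the bound of Theorem~\ref{main} is already $\binom{4}{2}=6$, exactly the conjectured value; the remark following Theorem~\ref{main} already flags that the two hypotheses coincide when $g=3$, and your UFD argument makes that explicit. What your approach buys is brevity and the avoidance of any case analysis or dimension reduction; what the paper's approach buys is a rehearsal, in the simplest nontrivial case, of the dimension-reduction and standard-basis techniques that become indispensable for $n\geq 4$, where the Theorem~\ref{main} bound is strictly weaker than the conjectured one and so cannot be invoked directly.
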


\begin{proof}
By Remark~\ref{remark}, it suffices to prove the proposition when $d=3$.
To handle $d=3$, let $x_1,x_2,x_3$ be a regular system of parameters for $R$.
If $\{r^{n-1}\mid r\in\m\}\nsubseteq J+\m^n$, we may apply Lemma~\ref{dimension reduction} to find an element $z$ and we may choose $x_3$ to be that $z$.
Then $\mu ((J+\m^3+x_3R)/(\m^3+x_3R))\geq 3$.
As $\dim \bar{\m}^2/\bar{\m}^3=3$ in a two dimensional regular local ring, we see that $x_1^2,x_1x_2,x_2^2\in J+\m^3+x_3R$.
Then $\m^3\subseteq \m J+\m^4+x_3R$ and by Proposition~\ref{Lemma 487}, $\mu(J)\leq \begin{pmatrix} 4\\2\end{pmatrix}=6$.

Finally, if $\{r^{n-1}\mid r\in\m\}\subseteq J+\m^n$, we have elements $f_1=x_1^2+h_1,f_2=x_2^2+h_2,f_3=x_3^2+h_3\in J$ with each $h_i\in \m^3$.
These three elements will constitute part of a standard basis for $J$.
If there exists a standard basis element $f_4\in J$ with $LT(f_4)=x_1x_2x_3$, then $J=(f_1,f_2,f_3,f_4)R$ and $\mu(J)\leq 4$.
Otherwise, all leading terms of standard basis elements will lie outside $\m^3$ and we see $\mu(J)\leq 6$.
\end{proof}

\begin{prop}
Let $n=4$ and assume the setting of Notation~\ref{notation}.
Then $\mu(J)\leq 10$.
\end{prop}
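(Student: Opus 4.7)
The plan is to reduce to $d=3$ and then apply Proposition~\ref{Lemma 487} with $\ell=4$, which gives $\mu(J)\leq\binom{g+\ell-2}{g-1}=\binom{5}{2}=10$. The reduction follows from Remark~\ref{remark} and Lemma~\ref{dimension reduction}: for $d>4$ this is condition~(1) of the lemma, and for $d=4$ I claim condition~(3) always holds. Indeed, if $\{r^3\mid r\in\m\}\subseteq J+\m^4$ in $d=4$, then for any regular system of parameters $x_1,\dots,x_4$ there exist $j_i=x_i^3-h_i\in J$ with $h_i\in\m^4$; the initial forms $x_1^3,\dots,x_4^3$ generate an $\m$-primary ideal in $\operatorname{gr}_\m R=k[x_1,\dots,x_4]$, so the associated graded of $R/(j_1,\dots,j_4)$ is a finite-dimensional quotient of $k[x_1,\dots,x_4]/(x_1^3,\dots,x_4^3)$, forcing $(j_1,\dots,j_4)$ to be $\m$-primary of height $4$ and contradicting $\Ht J=3$.

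At $d=3$ I work with the image $V$ of $J$ in $\m^3/\m^4\cong k[x_1,x_2,x_3]_3$, a subspace of dimension at least $4$, and split based on whether $\{r^3\mid r\in\m\}\subseteq J+\m^4$. If not (case~A), then combined with the hypothesis $J\nsubseteq yR+\m^4$ (which rules out $V$ being contained in any $L\cdot k[x_1,x_2,x_3]_2$), a standard genericity argument shows that for a generic choice of $x_3\in\m-\m^2$ the projection $V\to\bar\m^3/\bar\m^4$ is surjective. Then $\bar\m^3\subseteq\bar J+\bar\m^4$, Nakayama gives $\bar\m^3\subseteq\bar J$, and multiplying by $\m$ yields $(x_1,x_2)^4\subseteq\m^4\subseteq\m J+\m^5+x_3R$, so Proposition~\ref{Lemma 487} closes this case.

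The main work is case~B, where $\{r^3\mid r\in\m\}\subseteq J+\m^4$. Here $f_i=x_i^3+h_i\in J$ for $i=1,2,3$ ($h_i\in\m^4$), and some $f_4\in J$ has image $\phi$ in $\m^3/\m^4$ independent from the $x_i^3$; after subtracting multiples of $f_1,f_2,f_3$ I may take $\phi$ to be a nonzero cubic form with no $x_i^3$ term. I would then replace $x_3$ by $x_3'=x_3-\alpha x_1-\beta x_2$ for generic $(\alpha,\beta)\in k^2$ so that in $\bar R=R/x_3'R$, the image $\bar\phi=\phi(\bar x_1,\bar x_2,\alpha\bar x_1+\beta\bar x_2)$ has a nonzero coefficient of $\bar x_1^2\bar x_2$ or $\bar x_1\bar x_2^2$. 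A direct expansion shows that these two middle coefficients, viewed as polynomials in $(\alpha,\beta)$, cannot both vanish identically unless every nondiagonal coefficient of $\phi$ is zero, contradicting $\phi\neq 0$. From this I extract $f\in J$ with $f\equiv c_1x_1^2x_2+c_2x_1x_2^2\pmod{x_3'R+\m^4}$, $(c_1,c_2)\neq(0,0)$. Then $x_1f,x_2f\in\m J$, and combining with $x_1^3x_2=x_2(f_1-h_1)\in\m J+\m^5$ and $x_1x_2^3=x_1(f_2-h_2)\in\m J+\m^5$ forces $x_1^2x_2^2\in\m J+\m^5+x_3'R$. Together with $x_1^4,x_1^3x_2,x_1x_2^3,x_2^4\in\m J+\m^5$ obtained from $f_1,f_2$, this gives $(x_1,x_2)^4\subseteq\m J+\m^5+x_3'R$, and Proposition~\ref{Lemma 487} with $\ell=4$ yields $\mu(J)\leq 10$.

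The central obstacle is case~B: the change-of-parameters step that manufactures a nondiagonal cubic term in $\bar\phi$, and the resulting extraction of the specific relation $x_1^2x_2^2\in\m J+\m^5+x_3'R$. Case~A is comparatively routine, though it also relies on verifying that the Notation's hypothesis $J\nsubseteq yR+\m^4$ places $V$ in generic enough position for the projection $V\to\bar\m^3/\bar\m^4$ to be surjective for a suitable $x_3$.
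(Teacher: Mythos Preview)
Your reduction from $d=4$ to $d=3$ is correct and in fact cleaner than the paper's: the observation that elements $j_i\equiv x_i^3\pmod{\m^4}$ for $i=1,\dots,4$ have initial forms $x_1^3,\dots,x_4^3$ forming a regular sequence in $\operatorname{gr}_\m R$, hence generate an $\m$-primary ideal inside $J$, immediately contradicts $\Ht J=3$. The paper keeps this subcase and handles it by a standard-basis chain count instead. Your case~B at $d=3$ is also sound.

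The gap is case~A. The assertion that the projection $V\to\bar\m^3/\bar\m^4$ is \emph{surjective} for a generic parameter $x_3$ does not follow from $\dim V\geq 4$, $V\nsubseteq L\cdot k[x_1,x_2,x_3]_2$, and ``not all cubes lie in $V$''. Consider
\[
V=\langle\, x_1^3,\; x_1^2x_2,\; x_1^2x_3,\; x_2^2x_3 \,\rangle\subset k[x_1,x_2,x_3]_3.
\]
This is four-dimensional, has no common linear factor, and $x_2^3\notin V$; yet for every linear form $L$ the image $V/(V\cap L M_2)$ has dimension at most $3$. (For $L=x_3+ax_1+bx_2$ the image is $\langle x_1^3,\,x_1^2x_2,\,ax_1x_2^2+bx_2^3\rangle$, and for $L\in(x_1,x_2)$ it is even smaller.) So your Nakayama step never applies, and the hand-wave ``standard genericity argument'' is where the argument breaks.

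The paper does not split into your cases A/B at $d=3$. It aims only for image-dimension $\geq 3$ (not $4$), proves this by a contradiction argument exploiting $n$-compliant parameters, and then supplies the missing piece separately: if $h_1,h_2,h_3\in k[x_1,x_2]_3$ are linearly independent with no common factor, then $(x_1,x_2)\langle h_1,h_2,h_3\rangle=k[x_1,x_2]_4$. The ``no common factor'' condition is exactly what the $n$-compliant hypothesis (height of $\Phi_4(J)+(x_3)$ at least $3$) provides. To repair case~A you would need both of these ingredients; neither is routine.
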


\begin{proof}
By Remark~\ref{remark}, we may assume $d=\dim R \leq 4$.
If $d=4$, we choose an $n$-compliant system of parameters $x_1,x_2,x_3,x_4$ for $R$.
Suppose $\{r^{n-1}\mid r\in\m\}\subseteq J+\m^n$.
Then we have elements $f_1=x_1^3+h_1,f_2=x_2^3+h_2,f_3=x_3^3+h_3\in J$ with each $h_i\in \m^4$.
These three elements will constitute part of a standard basis for $J$.
We can count the size of a standard basis by counting the number of leading terms and Proposition~\ref{1.4} tells us that it is sufficient to count the $x_1^{e_1}x_2^{e_2}$ which are factors of the leading terms.
The only possibilities are $x_1^3,x_1^2x_2^2,x_1^2x_2,x_1^2,x_1x_2^2,x_1x_2,x_1,x_2^3,x_2^2,x_2,1$.
These are eleven in all and so it suffices to show all cannot occur simultaneously to show $\mu(J)\leq 10$.
To do this, we note that $x_1^2x_2^2,x_1^2x_2,x_1^2,x_1,1$ cannot all occur since we would need five different values for $e_3$ and $0\leq e_3\leq 3$.
Thus we may again employ Lemma~\ref{dimension reduction} and Remark~\ref{remark} to reduce to the case $d=3$.

Let $x_1, x_2,x_3$ be an $n$-compliant system of parameters for $R$ and suppose $f_1,\dots,f_m\in J$ are linearly independent modulo $\m^4$.
We set $\phi_i=\Phi(f_i)$ for each $i$.
It would violate the hypothesis if $(\phi_1,\dots,\phi_m)k[x_1,x_2,x_3]$ were contained in a height one (principal) ideal and so we can easily choose $f_1,\dots,f_m$ so that $\phi_1,\phi_2$ generate a height two ideal.
Throughout the process, we will often change our system of parameters by replacing $x_j$ with $x_j'=x_j-\alpha x_i$ with $\alpha$ a unit in $A$ and $i\neq j$.
As noted earlier, it will be easy to do so while keeping the parameters $n$-compliant.
We claim that we may choose our parameters $x_1,x_2,x_3$ so that $\mu((J+x_3R+\m^4)/(x_3R+\m^4))\geq 3$.
Assume this is false.
Let $V=J+\m^4/\m^4$.
As the height of $(\phi_1,\dots,\phi_m,x_j)R$ is three for any permissible parameter, we must then have $\mu((J+x_jR+\m^4)/(x_jR+\m^4))=2$ for any parameter $x_j$.
First we note that if $x_1x_2x_3\in V$ and $x_1,x_2,x_3'=x_3-\alpha x_1$ and $x_1,x_2,x_3''=x_3-\beta x_2$ are both $n$-compliant systems of parameters, then either $x_1x_2x_3'\notin V$ or $x_1x_2x_3''\notin V$.
Hence $x_1x_2x_3\notin V$ for a general choice of parameters.
To see this simply note that otherwise we have $x_1^2x_2,x_1x_2^2\in V$ and so these two elements comprise a basis for $(J+x_3R+\m^4)/(x_3R+\m^4)$.
This contradicts the fact that $J\nsubseteq (x_1,x_3)R+\m^4$.

Using our standard change of parameters, we may adjust $x_2,x_3$ so that $x_1^3\in S(\phi_1)$.
Then we subtract multiples of $f_1$ from each $f_i$, $i>1$ so that $x_1^3\notin S(\phi_i)$, $i\neq 1$.
Similarly we adjust $x_1,x_3$ to force $x_2^3\in S(\phi_2)$; note this does not change any $x_1^3$ terms.
Again subtract multiples of $f_2$ from the others so that only $\phi_2$ has $x_2^3$ in its support.
We do the same to get $x_3^3\in S(\phi_3)$ and in the support of no other $\phi_i$.
As noted above, if necessary we may adjust $x_3$ further so that $x_1x_2x_3\notin V$.
The assumption $\mu((J+x_1R+\m^4)/(x_1R+\m^4))=2$ forces $x_1$ to divide some linear combination of $\phi_2,\phi_3,\phi_4$ and it quickly follows that $x_1$ divides $\phi_4$.
Repeating with $x_2,x_3$ gives $\phi_4=x_1x_2x_3$.
This contradiction proves the claim.

As $10= \begin{pmatrix} g+n-2\\ g-1 \end{pmatrix}$ when $g=3$ and $n=4$, we can complete the proof by applying Proposition~\ref{Lemma 487}  provided we can show $(x_1,x_2)^4R\subset \m J+\m^5+x_3R=\m(J+\m^4)+x_3R$.
This comes down to showing $(x_1,x_2)^4k=(x_1,x_2)\bar{V}$ where $\bar{V}$ is $V$ modulo $x_3M_2\cap V$.
The setup is then that we have three linearly independent polynomials $h_1,h_2,h_3\in (x_1,x_2)^3$ which do not have a common factor modulo $\m^4$ and want to show $(x_1,x_2)(h_1,h_2,h_3)R=(x_1,x_2)^4R$.
Suppose not.
Then $(x_1,x_2)^4/(x_1,x_2)^5$ is a five dimensional vector space and $x_1(h_1,h_2,h_3)/\m^5$, $x_2(h_1,h_2,h_3)/\m^5$ are three dimensional subspaces.
Our assumption forces the intersection to be dimension at least two and so, after a linear adjustment of $h_1,h_2,h_3$, there exist $s_1,s_2\in (h_1,h_2,h_3)R$ with $x_1\bar{h}_1=x_2\bar{s}_1,x_1\bar{h}_2=x_2\bar{s}_2$.
Then $(\bar{h}_1,\bar{h}_2)\cap(\bar{s}_1,\bar{s}_2)\neq (0)$.
We can adjust our notation again so that $\bar{h}_2$ is in the intersection and then adjust $h_1,h_3$ accordingly so that $x_1\bar{h}_1=x_2\bar{h}_2, x_1\bar{h}_2=x_2\bar{h}_3$.
It follows that there exists $t\in\m$ such that $\bar{h}_1=x_2^2\bar{t}, \bar{h}_2=x_1x_2\bar{t}, \bar{h}_3=x_1^2\bar{t}$.
However this contradicts the hypothesis that $h_1,h_2,h_3$ do not have a common factor modulo $\m^4$.
This completes the proof.
\end{proof}

\begin{remark}
The final proposition gives a bound one higher than desired.
In truth, the fact that an ideal is perfect is a very strong assumption and we are using little of that strength in this article.
The author is inclined to believe the optimal result is in fact true, but it seems clear that proving it will require techniques for exploiting perfectness that are not employed in this article.

The proof of the proposition is unconventional.  Many cases are dealt with and, in each case that is dismissed, the bound of $15$ is achieved.
It is only the final case discussed at the very end of the proof where $16$ is a possibility.
This completes the proof.
We note that, only in this final case do we allow for the possibility of sixteen generators.
\end{remark}

\begin{prop}
Let $n=5$ and assume the setting of Notation~\ref{notation}.
Then $\mu(J)\leq 16$.
\end{prop}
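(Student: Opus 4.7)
The plan is to mirror the structure of the $n=4$ proof closely, with care taken at the critical linear-algebra step that now involves degree-$4$ forms in $k[x_1,x_2]$.

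First I would apply Lemma~\ref{dimension reduction} and Remark~\ref{remark} to reduce to $d \leq 5$. To further reduce to $d = 3$, in each of the cases $d = 5$ and $d = 4$ I would fix an $n$-compliant regular system of parameters. If $\{r^4 : r \in \m\} \not\subseteq J + \m^5$, condition (3) of Lemma~\ref{dimension reduction} applies and we may cut $d$ by one. If instead $\{r^4 : r \in \m\} \subseteq J + \m^5$, we obtain standard basis elements $f_i = x_i^4 + h_i$ with $h_i \in \m^5$ for each parameter; every other leading term $x_1^{e_1}\cdots x_d^{e_d}$ therefore satisfies $e_i \leq 3$ for all $i$. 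The distinct-$(e_1,e_2)$ criterion of Proposition~\ref{1.4} then restricts the realizable $(e_1,e_2)$ pairs, and a count analogous to the $d = 4$ analysis in the $n = 4$ proof should either bound $\mu(J)$ directly by a small number or force the $e_3,\dots,e_d$ exponent constraints to be violated, yielding another dimension reduction.

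Once reduced to $d = 3$, fix an $n$-compliant system of parameters $x_1,x_2,x_3$ and let $V \subset M_4$ be the $k$-span of $\Phi_5(J)$; by hypothesis $\dim V \geq 5$. Let $\bar V \subset k[x_1,x_2]_4$ denote the image of $V$ modulo $x_3 M_3$. Through the sort of iterative parameter-change argument used in the $n = 4$ proof, replacing $x_3$ by $x_3 + \alpha x_j$ for well-chosen scalars and exploiting $n$-compliance together with the hypothesis $J \nsubseteq yR + \m^5$, I would show we may take $\dim \bar V \geq 4$. If $\dim \bar V = 5$, then $\bar V = k[x_1,x_2]_4$, so $(x_1,x_2)\bar V = k[x_1,x_2]_5$, giving $(x_1,x_2)^5 R \subseteq \m J + \m^6 + x_3 R$ and hence $\mu(J) \leq \binom{6}{2} = 15$ by Proposition~\ref{Lemma 487}. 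If $\dim \bar V = 4$, a short dimension count shows that $\dim(x_1\bar V \cap x_2\bar V) \leq 3$: equality would force $\bar V = (x_1,x_2)K$ for some $3$-dimensional subspace $K \subseteq k[x_1,x_2]_3$, which in turn forces every element of $\bar V$ to share a common linear factor and contradicts $n$-compliance exactly as in the common-factor argument closing the $n = 4$ proof. Hence $(x_1,x_2)\bar V$ has codimension at most $1$ in $k[x_1,x_2]_5$, and a one-monomial modification of the proof of Proposition~\ref{Lemma 487} permits at most one additional non-superfluous generator whose leading term has $e_1 + e_2 = 5$, giving $\mu(J) \leq 15 + 1 = 16$.

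The main obstacle I anticipate is the step arranging $\dim \bar V \geq 4$: in $d = 3$ this is not supplied directly by Lemma~\ref{dimension reduction} (whose hypotheses require $d \geq n$), so one must combine the hypothesis $\mu(J+\m^5/\m^5) \geq 5$ with the parameter-changing technology of Section~3 in a somewhat delicate way, tracking how the supports of the leading forms $\Phi_5(f_i)$ change under elementary substitutions. A secondary difficulty is the verification that $\dim(x_1\bar V \cap x_2\bar V) = 4$ is impossible, which should reduce to showing that $\bar V = x_1K + x_2K$ for a $3$-dimensional $K \subseteq k[x_1,x_2]_3$ forces a linear form to divide every element of $\bar V$. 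Finally, the $d = 5$ analysis in characteristic $2$, where the remark following Lemma~\ref{dimension reduction} warns that conditions (2) and (3) may simultaneously fail, must be handled by direct leading-term accounting rather than by further dimension reduction.
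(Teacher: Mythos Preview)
Your proposal has two structural gaps, and together they account for most of the actual work in the paper's proof.

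First, your reduction from $d=4$ to $d=3$ does not go through. All three hypotheses of Lemma~\ref{dimension reduction} require $d\geq n$, so for $n=5$ the lemma is silent when $d=4$; there is no dichotomy ``either condition (3) applies or $\{r^4\}\subseteq J+\m^5$'' available to you. The paper treats $d=4$ directly with a long argument: one first arranges a $5$-compliant system so that $\dim V/(x_i,x_j)M_3\cap V=3$ for every pair $i\neq j$, then shows $\dim V=5$, extracts an integer $q$ with $\phi_i=x_i^q\theta$ for $i\leq 4$, and runs a separate analysis for $q=1,2,3,4$. It is precisely the $q=1$ subcase here, not anything in $d=3$, that produces the bound $16$ rather than $15$. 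Your leading-term count ``analogous to the $d=4$ analysis in the $n=4$ proof'' cannot substitute for this, because you have no way to force $x_1^4,x_2^4,x_3^4$ into $\Phi_5(J)$ when $d=4$.

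Second, in $d=3$ you assert that one can always arrange $\dim\bar V\geq 4$ by parameter changes, but you give no argument and flag it yourself as the main obstacle. The paper does not attempt this; instead it explicitly treats the complementary case ``$\mu((J+x_3R+\m^5)/(x_3R+\m^5))\leq 3$ for every choice of $x_3$'' by deriving, from the maximality of the chosen $x_3$, a dependence relation $(Ax_1^m+Bx_2^m)\phi_{4m}=(Cx_1^q+Dx_2^q)\phi_{5q}$ and then running a six-way case split on $(m,q)$. Several of those cases are disposed of by contradiction, but cases (ii), (iii), and (vi) yield genuine bounds on $\mu(J)$ via chain-counting and Proposition~\ref{Lemma 487}, so there is no indication that this situation is vacuous. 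Your sketch handles only the easy half of $d=3$ (where indeed the paper also gets $15$ immediately); the hard half is simply missing.
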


\begin{proof}
Let $x_1,\dots,x_d$ be a $5$-compliant system of parameters.
By Lemma~\ref{dimension reduction} and Remark~\ref{remark}, we can reduce to three cases:
\begin{enumerate}
  \item $d=3$
  \item $d=4$
  \item $d=5$ and $x_1^4,x_2^4,x_3^4\in \Phi_5(J)$.
\end{enumerate}
We shall dispose of Case 3 first as it is the easiest.
We mimic part of the proof of the previous result.
We have elements $f_1=x_1^4+h_1,f_2=x_2^4+h_2,f_3=x_3^4+h_3\in J$ with each $h_i\in \m^5$.
These three elements will constitute part of a standard basis for $J$.
(Recall we chose our order on monomials to respect degree to the extent that, for example, $LT(x_3^4+x_1^3x_2^2)=x_3^4$.)
We can count the size of a standard basis by counting the number of leading terms and Proposition~\ref{1.4} tells us that it is sufficient to count the $x_1^{e_1}x_2^{e_2}$ which are factors of the leading terms.
The only possibilities are $x_1^4, x_2^4,x_1^{e_1}x_2^{e_2}$ for $0\leq e_1,e_2\leq 3$.
These are eighteen in all and so it suffices to show that at most fifteen can occur simultaneously.
To do this, we note that $0\leq e_3\leq 4$ means we can have at most five different values for $e_3$ and so the longest possible well-ordered chain of $x_1^{e_1}x_2^{e_2}$ contains five elements.
Consider the chains $x_1^3x_2^3,x_1^3x_2^2,x_1^3x_2,x_1^3,x_1^2,x_1,1$ and
$x_1^3x_2^3,x_1^2x_2^3,x_1x_2^3,x_2^3,x_2^2,x_2,1$.
As each contains seven elements, at least two must be omitted from each chain.
As $1$ corresponds to $x_3^4$, it cannot be omitted.
The two chains then have only one remaining element in common and so omitting two from each forces us to omit at least three elements, reducing the number of possibilities to fifteen as desired.

Next we consider the $d=3$ case.
Suppose we can choose our parameters $x_1,x_2,x_3$ so that $\mu((J+x_3R+\m^5)/(x_3R+\m^5))\geq 4$.
Here we claim that $(x_1,x_2)^5R\subset \m J+\m^6+x_3R$.
Assuming the claim, we may employ Proposition~\ref{Lemma 487} with $\ell=5$ to get $\mu(J)\leq \begin{pmatrix} 6 \\ 2 \end{pmatrix}=15$ to prove the result in this situation.
To prove the claim, we first note that it is obvious if $\mu((J+x_3R+\m^5)/(x_3R+\m^5))=5$.
So it comes down to showing that if $K$ is a four dimensional subspace of $\tilde{M}_4$, the homogeneous degree $4$ elements of $k[x_1,x_2]$, which is not contained in $y\tilde{M}_3$ for any element $y$, the dimension of $x_1K+x_2K$, viewed as a subspace of $\tilde{M}_5$, equals $6$.
Since $x_1K,x_2K$ are both four dimensional, this will happen unless the intersection is three dimensional as $4+4-5=3$.
However, in that case, we have three dimensional subspaces $K_1,K_2$ of $K$ such that $x_1K_1=x_2K_2$.
As $K\nsubseteq x_1\tilde{M}_3$, we must have $\dim K\cap x_1\tilde{M}_3=3$ and so it must be that $K\cap x_1\tilde{M}_3=K_2$.
Another dimension argument tells us $K\cap x_1^3\tilde{M}_1\neq (0)$ and so there exists an element $x_1^3u\in K$.
Now $K\cap x_1\tilde{M}_3=K_2$ forces $x_1^2x_2u, x_1x_2^2u, x_2^3u\in K$ and $K=u\tilde{M}_3$, a contradiction which proves the claim.
So we have reduced to the case $\mu((J+x_3R+\m^5)/(x_3R+\m^5))\leq 3$ for every choice of $x_3$.

Let $f_1,\dots,f_5\in J$ constitute all or part of a generating set for $J+\m^5/\m^5$.
Let $\phi_i=\Phi_5(f_i)$ be the corresponding degree 4 homogeneous elements of $k[x_1,x_2,x_3]$, which we regard as a vector space over $k$ and denote as $M_4$.
We may write $\phi_i=\sum_{\ell=0}^4\phi_{i\ell}x_3^\ell$ where $\phi_{i\ell}$ is a homogeneous polynomial in $k[x_1,x_2]$ of degree $4-\ell$.
Adjusting the generators if necessary, we may assume the nonzero $\phi_{i0}$ are linearly independent and choose our original parameters $x_i$ so that the number of nonzero $\phi_{i0}$ is maximized.

By the reduction we have just achieved, we may assume $\phi_{40}=\phi_{50}=0$.
We also can suppose $\phi_{10}, \phi_{20}, \phi_{30}$ are nonzero.
If this is impossible, the proof given below is greatly simplified.
Making the change of variables $x_3'=x_3-\alpha x_1$ and writing $\phi_i=\sum_{\ell=0}^4\phi_{i\ell}'(x_3')^\ell$,
we see that $\phi_{i0}'=\sum_{\ell=0}^4\alpha^\ell \phi_{i\ell}x_1^\ell$.
Now $\phi_{10}',\phi_{20}',\phi_{30}',\phi_{40}'$ must be linearly dependent.
As $\tilde{M}_4$ is five dimensional, this says that all of the $4\times 4$ minors of a certain $4\times 5$ matrix are zero.
Now each of these minors is a polynomial in $\alpha$ and since they must vanish for all but finitely many $\alpha$ and $k$ is infinite, the polynomials must be identically zero.
Let $m$ be the smallest integer such that $\phi_{4m}\neq 0$. 
It is easy to see that the coefficients of $\alpha^j$ for $j<m$ are trivially zero and the coefficients of $\alpha^m$ all vanish exactly when $\phi_{10},\phi_{20},\phi_{30},x_1^m\phi_{4m}$ are linearly dependent, i.e., when $x_1^m\phi_{4m}$ is in the vector space with basis $\phi_{10},\phi_{20},\phi_{30}$.
Similarly we see that $x_2^m\phi_{4m}$ is also in that vector space and in fact so is $(x_1+\beta x_2)^m\phi_{4m}$ for infinitely many choices of $\beta$.
Likewise, if $q$ is minimal such that $\phi_{5q}\neq 0$, that space also contains $x_1^q\phi_{5q}$ and $x_2^q\phi_{5q}$.
This forces  $x_1^m\phi_{4m}, x_2^m\phi_{4m}, x_1^q\phi_{5q}, x_2^q\phi_{5q}$ to be a linearly dependent set and this fact will be our primary resource.
It should be noted that if $q=m$ and $\phi_{5q}$ is a linear multiple of $\phi_{4m}$, we can subtract a multiple of $f_4$ from $f_5$ and reduce to the case $m<q$.
Then our four elements will span the three dimensional space and we will always have a dependence relation
$$(Ax_1^m+Bx_2^m)\phi_{4m}=(Cx_1^q+Dx_2^q)\phi_{5q}$$

We complete the proof by considering the different possibilities for $m$ and $q$.  
By symmetry, we may assume $m\leq q$.
We shall consider six cases which exhaust all possibilities:  
\begin{enumerate}[(i)]
  \item $m+q<4$
  \item $q=4$
  \item $m=q=2$
   \item $m=q=3$
  \item $m=2,q=3$
  \item $m=1,q=3$
\end{enumerate}

Case (i):  Here the dependence relation forces $\phi_{4m},\phi_{5q}$ to have a nontrivial common factor $y$.
But then $J\subset (x_3,y)R+\m^5$, a contradiction, and so Case (i) cannot occur.

Case (ii):  Here $\phi_5=x_3^4$ and so $LT(f_5)=x_3^4$.
We see $x_1^4,x_2^4\in (\phi_{10},\phi_{20},\phi_{30})K$ and so we can rearrange $f_1,f_2,f_3$ so that $\phi_{10}=x_1^4=LT(f_1)$ and $\phi_{20}=x_2^4=LT(f_2)$.
The situation is now identical to what we had in Case (3) above and so we see $\mu(J)\leq 15$.

Case (iii):  We have $(Ax_1^2+Bx_2^2)\phi_{42}=(Cx_1^2+Dx_2^2)\phi_{52}$.
As $\phi_{42}, \phi_{52}$ must be relatively prime, we may assume $\phi_{42}=Cx_1^2+Dx_2^2$ and $\phi_{52}=Ax_1^2+Bx_2^2$.
Replacing $f_4,f_5$ and $f_1,f_2,f_3$ by appropriate linear combinations, we can reduce to the case $\phi_{42}=x_1^2$, $\phi_{52}=x_2^2$, $\phi_{10}=x_1^4$, $\phi_{20}=x_1^2x_2^2$, and $\phi_{30}=x_2^4$.
As $(x_1,x_2)(x_1^4,x_1^2x_2^2,x_2^4)=(x_1,x_2)^5$, we may invoke Proposition~\ref{Lemma 487} with $\ell=5$ to get $\mu(J)\leq 15$.

Case (iv):  Here $(Ax_1^3+Bx_2^3)\phi_{43}=(Cx_1^3+Dx_2^3)\phi_{53}$ gives $x_1^3(A\phi_{43}-C\phi_{53})=x_2^3(D\phi_{53}-B\phi_{43})$.
As $\phi_{43},\phi_{53}$ are linear, this forces $A\phi_{43}-C\phi_{53}=0$, which cannot happen as $\phi_{43}, \phi_{53}$ are linearly independent.
Thus Case (iv) is impossible.

Case (v):  This is another case which cannot occur.  
If $\charac k\neq 2$, $m=2$ is in fact impossible.
By varying $\beta$ in $(x_1+\beta x_2)^m\phi_{4m}$, we see that $x_1^2\phi_{42},x_1x_2\phi_{42},x_2^2\phi_{42}$ are all in $(\phi_{10},\phi_{20},\phi_{30})k$ and so form a basis for that vector space.
This contradicts $J\nsubseteq (x_3,\phi_{42})R+\m^5$.
Similarly $q=3$ cannot occur if $\charac k\neq 3$ and $k$ cannot simultaneously have two distinct characteristics.

Case (vi):  We have $(Ax_1+Bx_2)\phi_{41}=(Cx_1^3+Dx_2^3)\phi_{53}$.
As $\phi_{41},\phi_{53}$ are relatively prime, we may assume $\phi_{41}=Cx_1^3+Dx_2^3$ and $\phi_{53}=Ax_1+Bx_2$.
At this point, we will no longer need compatible parameters and we replace $x_1,x_2$ by new parameters $x_1',x_2'$ where $x_2'=Ax_1+Bx_2$ and $x_1'=x_1$ unless $B=0$, in which case we set $x_1'=x_2$.
As usual, we drop the primes as we no longer need the former parameters.
Then $\phi_{53}=x_2$ and $\phi_{41}=C'x_1^3+D'x_2^3$ with $C'\neq 0$.
We may rearrange $f_1,f_2,f_3$ so that $\phi_{10}=(C'x_1^3+D'x_2^3)x_1$, $\phi_{20}=x_1^3x_2$, and $\phi_{30}=x_2^4$.
Then $f_1,f_2,f_3,f_4,f_5$ will all be part of a standard basis and have respective leading terms $x_1^4,  x_1^3x_2, x_2^4, x_1^3x_3, x_2x_3^3$.
Again we can count the size of a standard basis by counting the number of leading terms and Proposition~\ref{1.4} tells us that it is sufficient to count the $x_1^{e_1}x_2^{e_2}$ which are factors of the leading terms.
Clearly, except for $x_1^2x_2^3$, all possibilities require $e_1+e_2\leq 4$ and so there are at most $16$; it only remains to show that all cannot occur simultaneously.
Consider the chain $x_2,x_2^2,x_2^3,x_1x_2^3,x_1^2x_2^3$.
For this entire chain to occur, we would need a descending sequence of five values of $e_3$.
However, as the first value of $e_3$ is known to be $3$, this is impossible and the proof of Case (1) is complete.

Finally we deal with the case $d=4$.
First we claim that we can find a $5$-compatible system of parameters $x_1,x_2,x_3,x_4$ such that $V/(x_i,x_j)M_3\cap V$ has dimension $3$ for every $i\neq j$.
To prove the claim, we first make the easy observation that we can assume the dimension is at most three.
If there is an element $x_i$ such that $\dim V/x_iM_3\cap V=5$, we can reduce to the $d=3$ case by modding it out.
With no such element, the only way $\dim V/(x_i,x_j)M_3\cap V>3$ is if $x_iM_3\cap V=x_jM_3\cap V$ and for fixed $x_i$, this can happen for at most three values of $x_j$.
This is easily avoided.
The next step is to observe that if $\dim V/(x_3,x_4)M_3\cap V=3$ and $x_4'=x_4+\alpha x_2$, $\dim V/(x_3,x'_4)M_3\cap V=3$ for all but finitely many choices of $\alpha$.
To see this, suppose $f_1,f_2,f_3$ are linearly independent module $(x_3,x_4)M_3$.
Then a certain $3\times 3$ minor of a $3\times 5$ matrix is nonzero.
(The columns correspond to $x_1^4,x_1^3x_2,x_1^2x_2^2,x_1x_2^3,x_2^4$.)
The alteration changes the entries of the matrix to polynomials in $\alpha$ with the same constant terms as the original matrix.
The new minor is then a polynomial in $\alpha$ with nonzero constant term and so can vanish for at most finitely many values of $\alpha$.
What this means is that to get $\dim V/(x_i,x_j)M_3\cap V=3$ for all pairs $i\neq j$, we can fix the pairs one at a time with the assurance that repairing one pair by an alteration of this kind can be done without spoiling those that are already good.
We will now show that we can repair the pair $(3,4)$.

Suppose $\dim V/(x_3,x_4)M_3\cap V<3$.
We can find $\phi_1\in x_1M_3\cap V$ and, as earlier, we can adjust $x_2,x_3,x_4$ to get $x_1^4\in S(\phi_1)$. Similarly we can get $\phi_2\in x_2M_3\cap V$ and by adjusting $x_3,x_4$, obtain $\phi_2\notin (x_3,x_4)M_3$.
Now we have $\dim V/(x_3,x_4)M_3\cap V\geq 2$ and we may assume it is exactly two since otherwise we have established the claim.
Since $V$ is spanned by $(x_3,x_4)M_3\cap V$ and $\{\phi_1,\phi_2\}$ and is not contained in $(x_1,x_3,x_4)M_3$, we must have $x_2^4\in S(\phi_2)$.
The same setup remains valid if $x_4$ is replaced by $x_4'=x_4+\alpha x_1$ for all but finitely many choices of $\alpha$ modulo $k$.
If $\dim V/x_4M_3\cap V = 4$, we work modulo $x_4$ and we are in a case which resembles the $d=3$ case except that we only have $f_1, f_2, f_4, f_5$ instead of $f_1, f_2, f_3, f_4, f_5$.
Effectively $\phi_{30}=0$ and we saw that led to an immediate contradiction.

Thus symmetrically we may assume $3=\dim V/x_4M_3\cap V =\dim V/x_3M_3\cap V =\dim V/(x_4')M_3\cap V $.
Now, for any parameter $y$, $yM_3\cap V=zM_3\cap V$ is possible for only finitely any values of $z$ (up to unit multiple of course).
Thus we may choose $x_3,x_4,x_3'=x_3+\beta x_4$ such that $x_3M_3\cap V, x_4M_3\cap V, x_3'M_3\cap V$ are all distinct.
As $\dim (x_3M_3\cap x_4M_3\cap V)=\dim (x_3M_3\cap V)-1$ and  $\dim (x_3M_3\cap x_3'M_3\cap V)=\dim (x_3M_3\cap V)-1$, we have $x_3M_3\cap V=x_3x_4M_2\cap V+x_3x_3'M_2\cap V$.
It follows that if $x_3\theta\in V$, $\theta\in (x_4,x_3')M_2=(x_3,x_4)M_2$.
Thus, modulo $x_3M_2$, $\theta\in x_4M_2$.
However, if we choose any $x_4'$ such that $x_3M_3\cap V\neq x_4'M_3\cap V$, the same argument yields $\theta\in x_4'M_2$ modulo $x_3M_2$.
This is impossible for more than three values of $x_4'$ unless $\theta$ is congruent to zero and so we get $\theta\in x_3M_2$.
Thus $x_3M_3\cap V=x_3^2M_2\cap V$ and by symmetry, $x_4M_3\cap V=x_4^2M_2\cap V$.
It follows that $x_3M_3\cap x_4M_3\cap V=x_3^2x_4^2k$ and $\dim V=5$.
As $\dim x_3M_3\cap V=2$ and this subspace contains $x_3^2(x_3')^2,x_3^2x_4^2,x_3^2(x_4')^2$, we have a clear contradiction.
We have now shown that we can find a $5$-compatible system of parameters $x_1,x_2,x_3,x_4$ such that $V/(x_i,x_j)M_3\cap V$ has dimension $3$ for every $i\neq j$ and we do so.

Next, if $\dim V>5$, we get $\dim x_3M_3\cap V=\dim V-4>1$.
Using the same argument as above, we get $x_3M_3\cap V=x_3^2M_2\cap V$ and assorted symmetric results.
As $x_3M_3\cap x_4M_3\cap V=x_3^2x_4^2k$, we get $\dim V=6$ and so $\dim x_3M_3\cap V=2$.
But $x_3^2(x_3')^2,x_3^2x_4^2,x_3^2(x_4')^2\in x_3M_3\cap V$, a contradiction which forces $\dim V=5$.
For any parameter $y$, there is a maximal integer $q_y$ such that $yM_3\cap V=y^{q_y}M_{4-q_y}\cap V$.
Let $q$ be the minimal value of $q_y$ if $y$ ranges over all generic parameters.
We now may choose a basis for $V$ consisting of $\phi_1=x_1^q\theta_1, \phi_2=x_2^q\theta_2, \phi_3=x_3^q\theta_3, \phi_4=x_4^q\theta_4, \phi_5$ with $\theta_1\notin x_1M_{4-q-1}$ and $\phi_\ell\notin (x_i,x_j)M_3$ unless $\ell$ equals either $i$ or $j$.
The last part is forced by $\dim V/(x_i,x_j)M_3\cap V=3$.

For all but finitely any $\alpha$, we can find an element $\sigma_\alpha=(x_1+\alpha x_2)^qh_\alpha\in V$.
As $\sigma_\alpha\in (x_1,x_2)M_3\cap V$ and $\phi_3,\phi_4,\phi_5$ are linearly independent modulo $(x_1,x_2)M_3\cap V$, we have $\sigma_\alpha=A_\alpha\phi_1+B_\alpha\phi_2$.
Avoiding finitely many choices of $\alpha$ so that $x_1+\alpha x_2$ does not divide $\theta_2$, we have $A_\alpha\neq 0$ and, replacing $\sigma_\alpha$ by a constant multiple, we may assume $A_\alpha =1$.
We now have $$(x_1+\alpha x_2)^qh_\alpha=x_1^q\theta_1+B_\alpha x_2^q\theta_2.$$
If $q>1$ but is not a power of the characteristic, we can obtain a quick contradiction.
Here $(x_1+\alpha x_2)^q$ will contain terms not divisible by $x_1^q$ or $x_2^q$.
This forces $h_\alpha\in (x_1,x_2)M_{4-q-1}$ and this in turns forces $\theta_1\in (x_1,x_2)M_{4-q-1}$.
By symmetry, $\theta_1\in (x_1,x_3)M_{4-q-1}$ and $\theta_1\in (x_1,x_4)M_{4-q-1}$.
However, the intersection of these three spaces is just $x_1M_{4-q-1}$ and this is impossible.
Hence we know $q=1$ or $q$ is a power of the characteristic of $k$ and so the equation above simplifies to 
$$x_1^qh_\alpha+\alpha^qx_2^qh_\alpha=x_1^q\theta_1+B_\alpha\x_2^q\theta_2.$$
Then $x_1^q(h_\alpha-\theta_1)=x_2^q(B_\alpha\theta_2-\alpha^qh_\alpha)$, which implies the existence of $\tau_\alpha$ such that  $h_\alpha-\theta_1=x_2^q\tau_\alpha$ and $B_\alpha\theta_2-\alpha^qh_\alpha=x_1^q\tau_\alpha$.
Combining these two equations to eliminate $h_\alpha$ gives $B_\alpha\theta_2-\alpha^q\theta_1=(x_1+\alpha x_2)^q\tau_\alpha$.
For all but finitely many $\alpha$, $B_\alpha\neq 0$ and we have $\theta_2\in (\theta_1, (x_1+\alpha x_2)^q)$ and so $\theta_2$ is a multiple of $\theta_1$.
Using symmetry and rescaling, we may assume $\theta_i=\theta_j$ for all $i,j$ and so $\phi_i=x_i^q\theta$ for all $i\leq 4$.

Next we may adjust $x_2,x_3,x_4$ so that $x_1^{4-q}\in S(\theta)$.
Then we may adjust $x_1,x_3,x_4$ so that $x_2^{4-q}\in S(\theta)$.
Note that this will not affect the $x_1^{4-q}$ term.
Similarly we complete the process and get $x_i^{4-q}\in S(\theta)$ for all $i$.
This gives $x_i^4\in S(\phi_i)$ for each $i$ and subtracting multiples of $f_1,f_2,f_3,f_4$ from $f_5$, we may also assume $x_i^4\notin S(\phi_5)$.

If $q=4$, we can reduce to Case (3) and so we are done.
For $q<4$, we view $S=k[x_1,x_2]$ as a homomorphic image of $k[x_1,x_2,x_3,x_4]$ in the natural way and, as earlier, view $\tilde{M}_i$ as the vector space of homogeneous forms of degree $i$.
Let $K$ be the subspace of $\tilde{M}_4$ spanned by the images of $\phi_1,\phi_2,\phi_5$.  ($\phi_3,\phi_4$ map to zero.)
So $K$ is three dimensional with basis $x_1^q\bar{\theta},x_2^q\bar{\theta},\bar{\phi_5}$.
We now handle the case where $q=2$.
Here it is true that $x_1K+x_2K=\tilde{M}_5$.
To see this, note that the left hand vector space is a subspace of the right side, which is itself a six dimensional vector space.
Hence we get the desired equality if the left side also has dimension six, something that will follow if $x_1^{q+1}\bar{\theta},x_1^qx_2\bar{\theta},x_1x_2^q\bar{\theta},x_2^{q+1}\bar{\theta},x_1\bar{\phi_5},x_2\bar{\phi_5}$ are linearly independent.
If not however, we get a dependence relation which can be written as $$(Ax_1^{q+1}+Bx_1^qx_2+Cx_1x_2^q+Dx_2^{q+1})\bar{\theta}+(Ex_1+Fx_2)\bar{\phi_5}=0.$$
As $\bar{\theta}$ and $\bar{\phi_5}$ are relatively prime and $\bar{\theta}$ has degree two, this is impossible.
Thus we have $(x_1,x_2)^5R\subset \m J+\m^6+(x_3,x_4)R$ and by Proposition~\ref{Lemma 487} with $\ell=5$, we get $\mu(J)\leq 15$.

If $q=3$, the same proof tells us that $\bar{\phi_5}$ is a scalar multiple of $Ax_1^4+Bx_1^3x_2+Cx_1x_2^3+Dx_2^4$.
As $x_1^4,x_2^4\notin S(\phi_5)$, by symmetry, we may assume $\bar{\phi_5}=x_1^3x_2+\delta x_1x_2^3$.
As we can rescale $x_2$, we may also assume $\bar{\theta}=x_1+x_2$.
A basis for $x_1K+x_2K$ is then $x_1^5+x_1^4x_2, x_1^4x_2+x_1^3x_2^2, x_1^4x_2+\delta x_1^2x_2^3, x_1^2x_2^3+x_1x_2^4, x_1x_2^4+x_2^5$.
Moreover, if $g\in k[x_1,x_2]$ is any homogeneous polynomial of degree $6$, $\bar{\phi}_5$ divides $g$ modulo $x_1+x_2$ as $\bar{\phi}_5\equiv x_1^4$ and $g\equiv x_1^6$.
So $g=\sigma_1 \bar{\theta}+\sigma_2\bar{\phi}_5$ and since $\sigma_1$ has degree $5$, $\sigma_1\in (x_1^3,x_2^3)$ and so $g\in (\bar{\phi}_1,\bar{\phi}_2,\bar{\phi}_5)$.
It follows that $x_1^5,x_1^4x_2,x_1^3x_2^2,x_1^2x_2^3,x_1x_2^4,x_2^6\in LT(\m J)$.
Recall that we can count the size of a standard basis by counting the number of leading terms and Proposition~\ref{1.4} tells us that it is sufficient to count the $x_1^{e_1}x_2^{e_2}$ which are factors of the leading terms.
It is clear that, for any leading term in our standard basis, we must have either $e_1+e_2\leq 5$ or $e_1=0$ and $e_2=6$.
It is also clear that, with the single possible exception of $x_1^0x_2^5$, any standard basis element with leading term such that $e_1+e_2>4$ is superfluous.
As there are exactly fifteen possibilities with $e_1+e_2\leq 4$, we have already shown $\mu(J)\leq 16$.
However, $f_1,f_2,f_3,f_5$ are the only standard basis elements which have leading terms of degree $4$.
Since $LT(f_3)=x_1x_3^3$, there cannot be a standard basis element with leading term either $x_1^2x_3^e$ or $x_1x_2x_3^e$ and so $\mu(J)\leq 14$ and the $q=3$ case is handled.
It remains only to consider $q=1$.

The $q=1$ case begins similarly. 
We let $K$ be the three dimensional subspace of $k[x_1,x_2]$ spanned by $x_1\bar{\theta},x_2\bar{\theta},\bar{\phi}_5$.
We claim that the dimension of $(x_1,x_2)K$ is $5$ and the dimension of $(x_1,x_2)^2K$ is $7$.
The first space is spanned by $x_1^2\bar{\theta},x_1x_2\bar{\theta},x_2^2\bar{\theta},x_1\bar{\phi}_5, x_2\bar{\phi}_5$ and the second is spanned by $x_1^3\bar{\theta},x_1^2x_2\bar{\theta},x_1x_2^2\bar{\theta},x_2^3\bar{\theta},x_1^2\bar{\phi}_5,x_1x_2\bar{\phi}_5,x_2^2\bar{\phi}_5$.
The proofs that these sets are linearly independent are almost identical and we do the second one.
If the set is linearly dependent, we get an equation $$(Ax_1^3+Bx_1^2x_2+Cx_1x_2^2+Dx_2^3)\bar{\theta}=(Ex_1^2+Fx_1x_2+Gx_2^2)\bar{\phi}_5.$$
Since $\bar{\theta},\bar{\phi}_5$ are relatively prime of degrees $3,4$ respectively, this is impossible.

We can count the size of a standard basis by counting the number of leading terms and Proposition~\ref{1.4} tells us that it is sufficient to count the $x_1^{e_1}x_2^{e_2}$ which are factors of the leading terms.
From our analysis of subspaces, we see that there are no standard basis elements with $e_1+e_2\geq 7$.
The fact that the first subspace has dimension five guarantees that six possible leading terms of degree six will be multiples of smaller leading terms and so we can obtain at most one standard basis element with $e_1+e_2=6$ and that standard basis element will be superfluous.
We are only guaranteed that four of the leading terms of degree five will be multiples of smaller leading terms and so there are potentially two standard basis elements with $e_1+e_2=5$, one of which will be superfluous while the other will not,
The number of standard basis elements with $e_1+e_2\leq 4$ is of course bounded by $15$ by standard combinatorics.
Hence the number of non-superfluous standard basis elements is at most $16$.
\end{proof}

\end{document}